\newtheorem{theorem}{Theorem}[section]
\newtheorem{lemma}[theorem]{Lemma}
\theoremstyle{definition}
\newtheorem{definition}[theorem]{Definition}
\newtheorem{example}[theorem]{Example}
\theoremstyle{remark}
\newtheorem{conjecture}[theorem]{Conjecture}
\newtheorem{question}[theorem]{Question}
\numberwithin{equation}{section}
\begin{document}

\title{Generalized pseudo-Anosov maps and Hubbard trees}

\author{Mariam Al-Hawaj}
\address{Department of Mathematics, University of Toronto, Toronto, Ontario}
\email{mariam.alhawaj@mail.utoronto.ca}


\date{\today}



\begin{abstract}
In this project, we develop a new connection between the dynamics of quadratic polynomials on the complex plane and the dynamics of homeomorphisms of surfaces. In particular, given a quadratic polynomial, we investigate whether one can construct an extension of it which is a generalized pseudo-Anosov homeomorphism. \textit{Generalized pseudo-Anosov} means it preserves a pair of foliations with infinitely many singularities that accumulate on finitely many points. We determine for which quadratic polynomials such an extension exists. The construction is related to the dynamics on the Hubbard tree, which is a forward invariant subset of the filled Julia set containing the critical orbit. We define a type of Hubbard trees, which we call \textit{crossing-free}, and show that these are precisely the Hubbard trees for which one can construct a generalized pseudo-Anosov map.
            
\end{abstract}

\maketitle


.

\section{Introduction}
The Nielsen-Thurston classification \cite{FM} of mapping classes established that every orientation preserving homeomorphism of a closed surface, up to isotopy, is either periodic, reducible, or pseudo-Anosov. Pseudo-Anosov maps have a particularly nice structure because they expand along one foliation by a factor of $\lambda >1$ and contract along a transversal foliation by a factor of $\frac{1}{\lambda}.$ The number $\lambda $ is called the \textit{dilatation} of the pseudo-Anosov. Fried showed \cite{Fr} that every dilatation $\lambda$ of a pseudo-Anosov map is an algebraic unit, and conjectured that every algebraic unit $\lambda$ whose Galois conjugates lie in the annulus $A_\lambda =\{z: \frac{1}{\lambda} <|z|<\lambda\}$ is a dilatation of some pseudo-Anosov on some surface $S.$
       
       Pseudo-Anosovs play a huge role in Teichm\"{u}ller theory and geometric topology. The relation between these and complex dynamics has been well studied, inspired by Thurston.     

      The goal of this paper is to build a new connection between complex dynamics and Teichm$\ddot{\text{u}}$ller theory by constructing \textit{generalized pseudo-Anosov} maps of surfaces from polynomials acting on the complex plane.
      
       Generalized pseudo-Anosov maps are surface homeomorphisms that preserve a pair of transverse foliations where the foliations have infinitely many singularities that accumulate on finitely many points. Given a quadratic polynomial, we are interested in constructing an extension of it which is a generalized pseudo-Anosov homeomorphism.
       Recall that a polynomial is \textit{post-critically finite} if all its critical orbits are finite and it is \textit{superattracting} if all critical orbits are purely periodic.
       
 The construction will be related to the dynamics on the \textit{Hubbard tree} $T$, which is an invariant subset of the filled Julia set (see Section  \ref{Hubbard} for the definition). 
 The \textit{core entropy} of post-critically finite polynomial $f$ is the topological entropy of the restriction of $f$ to its Hubbard tree.
 
So basically, we are interested in the following question.   
\begin{question}
	If $f:T \to T$ is a post-critically finite quadratic polynomial with Hubbard tree $T$, is there a surface $S$ and a generalized pseudo-Anosov homeomorphism $\varphi : S \to S$ that is an extension of $f$? 

  \[
  \begin{tikzcd} 
    S\arrow{r}{\varphi} \arrow[swap]{d}{\pi} & S \arrow{d}{\pi} \\
      T \arrow{r}{f} & T
  \end{tikzcd}
\]

In other words, we require the above diagram to commute on an open dense subset of $S$. See Definition \ref{extension} for a formal definition. 
\end{question}   
\begin{definition}
Let $T$ be the Hubbard tree of a post-critically finite quadratic polynomial.  We say that $f$ is \textit{extendable} to a generalized pseudo-Anosov homeomorphism if there exist a surface $S$ and a generalized pseudo-Anosov $\varphi$ such that $\varphi$ is an extension of $f$.	
\end{definition}

 We will show that the type of Hubbard trees for which the construction is possible are those that are crossing-free and non-degenerate. 

 A Hubbard tree is \textit{non-degenerate} when the critical point is not an endpoint: thus, it divides the tree into two sub-trees. We call \textit{upper branch} ${B_u}$ the sub-tree containing the critical value, and \textit{lower branch} ${B_l}$ the other one. 
  
\begin{definition}
A Hubbard tree $T$ is said to be \textit{crossing-free} if there exists an embedding of $T$ into the plane $\mathbb{C}$ that satisfies the following: 

The image $f(B_l)$ of the lower branch can be isotoped, while fixing the endpoints of $T$, into a tree that does not intersect the interior of the image $f(B_u)$ of the upper branch.
  \end{definition}
 
Below are examples of both a crossing-free Hubbard tree and a Hubbard tree with crossing.
\begin{figure}[h]
    \centering
    \includegraphics[width=0.35\textwidth]{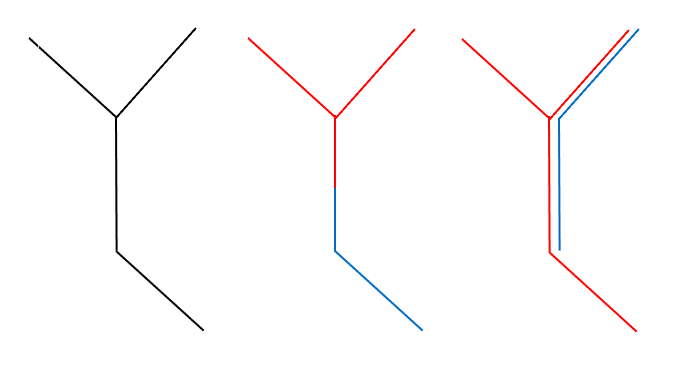}
    \caption{Crossing-free Hubbard tree.}
    \label{Crossing-free-example}
    \includegraphics[width=0.35\textwidth]{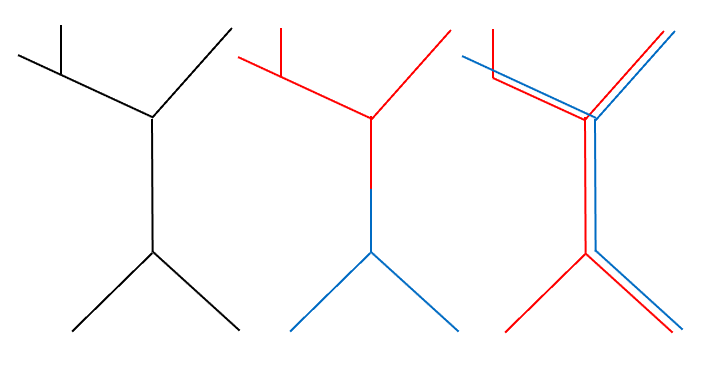}
    \caption{Hubbard tree with crossing.}
    \label{Crossing-example}
\end{figure}

   The main result of this paper is the following:  
 
     \begin{theorem} \label{theorem1}
     Let $f$ be a post-critically finite, superattracting quadratic polynomial and let $T$ be its Hubbard tree. 
 	  Then $f:T\to T$ is extendable to a generalized pseudo-Anosov $\varphi:S\to S$ if and only if it is crossing-free.
 	  Moreover, if $\lambda$ is the dilatation of $\varphi$, then $\log \lambda$ equals the core entropy of $f$.
 	  
 	    \end{theorem}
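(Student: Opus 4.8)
The plan is to reduce the statement to a single combinatorial object — the transition matrix of $f$ on a Markov partition of $T$ — and then to either realize that data by a surface homeomorphism (when $T$ is crossing-free) or extract a planar contradiction from a hypothetical extension (when it is not). First I would fix the combinatorial setup: since $f$ is post-critically finite, marking the (finite) post-critical set subdivides $T$ into finitely many edges $e_1,\dots,e_n$, each mapped by $f$ onto an edge path, yielding a nonnegative integer transition matrix $M$ with $M_{ij}$ counting how often $f(e_j)$ covers $e_i$. By definition the core entropy of $f$ equals $\log\rho(M)$, the log of the spectral radius, which by Perron--Frobenius is realized on the positive part coming from the connected tree $T$. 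The role of the crossing-free hypothesis is exactly to make the folding of $f$ at the critical point $0$ — which pushes the lower branch $B_l$ across $0$ into the region occupied by the image of the upper branch $B_u$ — resolvable inside a surface in a planar-compatible way.

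\textbf{($\Leftarrow$) Construction.} Starting from a crossing-free planar embedding of $T$, I would build $S$ by thickening: take a ribbon-graph (regular) neighborhood $N(T)$ of the embedded tree, with the cyclic order at each vertex given by the embedding, and then treat the points of the periodic critical orbit $0,f(0),\dots$ as the finitely many points where singularities are permitted to accumulate. Inside $N(T)$ I would lay down a train track $\tau$ whose branches run parallel to the edges $e_i$, with switches at the post-critical vertices, choosing the tangential (smoothing) structure at $0$ so that the branches issuing from $B_u$ and from $B_l$ are mutually tangent — which is consistent precisely because, after the crossing-free isotopy, $f(B_l)$ does not cross the interior of $f(B_u)$, so the identification $f$ makes at $0$ agrees with a single tangent direction. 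Then $f$ induces a train-track map $g\colon\tau\to\tau$ carried by $\tau$ with transition matrix $M$; since $\rho(M)=\lambda>1$ the map $g$ is expanding, and one must check it is efficient (no cancellation is created under iteration; equivalently no illegal turn maps to an illegal turn, and there is no invariant multicurve) — this is again where crossing-freeness is used, to prevent back-tracking at $0$. By the Thurston / Bestvina--Handel criterion, $g$ determines a unique transverse pair of measured foliations, invariant up to scaling by $\lambda^{\pm1}$, and hence a homeomorphism $\varphi\colon S\to S$ that is pseudo-Anosov off the accumulation set; because the local model of $f^{p}$ at a period-$p$ point of the critical orbit is the $2$-to-$1$ map $z\mapsto z^{2}$, the foliations pick up infinitely many singularities limiting onto that finite orbit, so $\varphi$ is genuinely \emph{generalized} pseudo-Anosov. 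The ribbon-graph collapse $\pi\colon S\to T$ semiconjugates $\varphi$ to $f$ on an open dense set, so $f$ is extendable.

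\textbf{($\Rightarrow$) Obstruction, and dilatation.} Conversely, given a generalized pseudo-Anosov extension $\varphi\colon S\to S$ with collapse $\pi$, collapsing the contracting (stable) leaves of $\varphi$ exhibits $T$ as a quotient of the surface $S$, and the ribbon structure of $S$ along $T$ endows $T$ with a well-defined planar embedding. Near $0$ the fact that $f$ is $2$-to-$1$ onto its image forces $\varphi$ to carry a neighborhood of $\pi^{-1}(B_l)$ onto a neighborhood of part of $\pi^{-1}(B_u)$ over $\pi^{-1}(f(0))$; if the induced embedding of $T$ had a crossing, i.e. $f(B_l)$ were genuinely forced to cross the interior of $f(B_u)$, then two distinct local sheets of $S$ over $T$ near the critical value would have to meet transversally, violating injectivity of the homeomorphism $\varphi$. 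Hence the embedding read off from $S$ is crossing-free. For the dilatation: $\log\lambda=h_{\mathrm{top}}(\varphi)$ by the usual formula for (generalized) pseudo-Anosov maps, valid here since the finite accumulation set carries no entropy; and $\varphi$ is an extension of $f|_{T}$ whose point-inverses are arcs contained in stable leaves, on which $\varphi$ is eventually contracting, so the fiberwise entropy vanishes — combined with the fact that a factor never increases entropy, this gives $h_{\mathrm{top}}(\varphi)=h_{\mathrm{top}}(f|_{T})$, which is the core entropy by definition. (Equivalently, both numbers are $\log\rho(M)$.)

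\textbf{Main obstacle.} I expect the delicate part to be the ($\Leftarrow$) direction: verifying that the train-track map $g$ produced from a crossing-free embedding is actually efficient and realizable by an honest homeomorphism, and that the limiting construction of $S$ yields genuine invariant measured foliations whose only pathology is the controlled accumulation of singularities on the periodic critical orbit. One must in particular rule out invariant curve systems and check that the ribbon-graph collapse introduces no extra identifications; this is precisely where the fine combinatorics of Hubbard trees — the $\beta$-fixed point, the structure of the periodic critical orbit, and, crucially, the absence of crossings — must be used in an essential way.
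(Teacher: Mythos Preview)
Your overall architecture (Markov partition of $T$, transition matrix $M$, thickening to a ribbon neighbourhood, train track carrying the dynamics) matches the paper's, and your treatment of the dilatation via $h_{\mathrm{top}}(\varphi)=h_{\mathrm{top}}(f|_T)=\log\rho(M)$ is correct and arguably cleaner than the paper's one-line remark. The $(\Rightarrow)$ direction is also in the same spirit as the paper, though the paper is more explicit: it fixes a section $i:T_0\to S_0$ of the collapse $\pi$, and writes down a straight-line homotopy in planar coordinates between $i\circ f\circ\pi$ and a rescaling of $\varphi$ along the invariant foliation, checking that at each time every leaf meets the image of each half-tree in exactly one point. Your ``transversal sheets would contradict injectivity of $\varphi$'' sketch is morally the same idea but would need to be made precise in this way.

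The genuine gap is in your $(\Leftarrow)$ step where you write ``By the Thurston / Bestvina--Handel criterion, $g$ determines a unique transverse pair of measured foliations \dots\ and hence a homeomorphism $\varphi:S\to S$ that is pseudo-Anosov off the accumulation set.'' There is no such black box available here. Bestvina--Handel takes as input an isotopy class of a surface homeomorphism on a compact surface and outputs an efficient representative with \emph{finitely} many singularities; it does not manufacture a surface and a \emph{generalized} pseudo-Anosov from an abstract train-track map on a thickened tree. The paper does not invoke any such criterion: it builds $S$ and $\varphi$ by hand. Concretely, it takes the right and left Perron eigenvectors $x,y$ of $M$, forms Euclidean rectangles $R_i$ of size $x_i\times y_i$, and then proves four separate lemmas showing that these rectangles can be glued consistently around every branch point. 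This is where the crossing-free hypothesis is actually used, and not merely to ``prevent back-tracking at $0$'': for instance, at a fixed branch point of even valence $n$ the gluing equations $z_i+z_{i+1}=y_i$ are solvable only if the compatibility condition $y_1-y_2+\cdots+y_{n-1}-y_n=0$ holds, and the paper shows that crossing-freeness forces a specific block structure on $M$ (consecutive $1$'s in the relevant rows) which makes $(1,-1,\dots,1,-1,0,\dots,0)$ a $(-1)$-eigenvector of $M$, whence orthogonality to the Perron left eigenvector $y$ gives exactly that compatibility. The periodic and pre-periodic branch-point cases require further arguments. After the rectangles are glued, the paper performs an explicit infinite sequence of boundary identifications (governed by the geometric series $\sum\lambda^{-np}$) and verifies directly that the resulting singularities accumulate only on the finitely many critical-orbit points. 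Your proposal contains none of this; your ``Main obstacle'' paragraph correctly flags that this is the hard part, but the work you would need to do there is essentially the entire content of the paper's forward direction, and it is not an efficiency check in the Bestvina--Handel sense.
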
 
 	  \begin{figure}[h]
     \centering
     \begin{subfigure}[b]{0.45\textwidth}
         \centering
         \includegraphics[width=\textwidth]{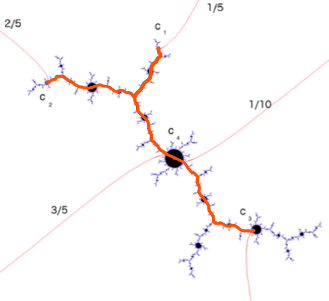}
         \caption*{Crossing-free Hubbard tree.}
     \end{subfigure}
     \hfill
     \begin{subfigure}[b]{0.45\textwidth}
         \centering
         \includegraphics[width=\textwidth]{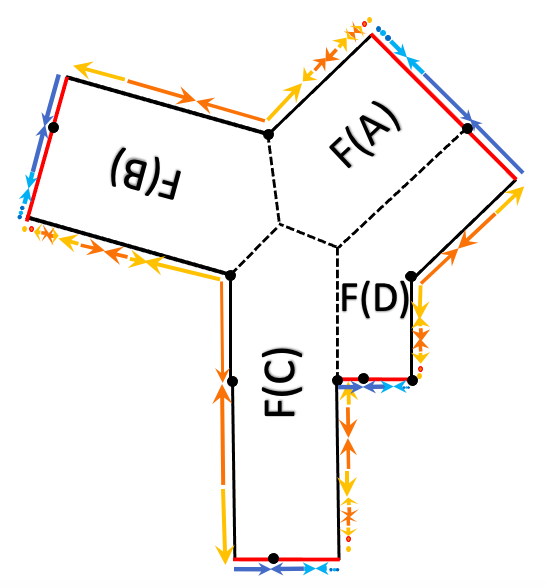}
         \caption*{Generalized pseudo-Anosov map.}
     \end{subfigure}
     \caption{The correspondence between the Hubbard tree and the generalized pseudo-Anosov map as in Theorem \ref{theorem1}.}
        \label{figure-3}
    \end{figure}
     
This result generalizes to complex polynomials the results of de Carvalho-Hall and Farber, who constructed generalized pseudo-Anosovs from real quadratic polynomials \cite{dCH} \cite{Fa}. 

To give an idea of the proof, the construction of the pseudo-Anosov map, that we will see in Section \ref{Main}, goes as follows: 
\begin{enumerate}
\item first, we thicken the tree and then we define a \textit{thick map} on it;
	\item since the map folds the thickened tree and to keep track of the folding, we construct a \textit{tree-like train track} by a procedure that's shown in Figure \ref{tree-like}; 
	\item we then use the dynamics of the train track to compute the transition matrix whose leading eigenvalue will be our dilatation $\lambda $;
	\item moreover, we use the eigenvectors of the matrix as the dimensions of the rectangles that we build;
	\item after that, we identify the rectangles to create the generalized pseudo-Anosov homeomorphism.
\end{enumerate}
To prove the converse (Theorem \ref{converse}), we use the invariant foliation of the generalized pseudo-Anosov to isotope apart the upper and the lower branches showing that the Hubbard tree must be crossing-free.
\subsection*{Acknowledgments}
The author would like to thank her advisor Giulio Tiozzo for bringing this problem to her attention and for his continuous help and support in writing this paper, as well as providing some of the complex dynamics pictures.
\section{Background}\label{Backgraound}
Let us recall some definitions.
\subsection{Pseudo-Anosov Homeomorphisms}

\begin{definition}
 A homeomorphism $\varphi :S \rightarrow S$ is said to be \textit{pseudo-Anosov} is if there is a pair of transverse measured foliations $(\mathcal{F}^u,\mu^u)$ and $( \mathcal{F}^s ,\mu ^s )$ on $S$, and a number $\lambda >1$
 such that the following hold:      
    $$\varphi _*(\mathcal{F}^u,\mu^u) = (\mathcal{F}^u,\lambda \mu^u) \text { and } \ \varphi_*(\mathcal{F}^s,\mu^s) = (\mathcal{F}^s,\frac{1}{\lambda} \mu^s).$$
    The measured foliations $(\mathcal{F}^u,\mu^u)$ and $( \mathcal{F}^s ,\mu ^s )$ are called the \textit{unstable foliation}
and the \textit{stable foliation}, and the number $\lambda $ is called the
\textit{dilatation} of $\varphi$.    
    \end{definition}

The following definition of generalized pseudo-Anosov map is taken from \cite{dCH}:
\begin{definition}
	A homeomorphism $\varphi :S\rightarrow S$ of a smooth surface $S$ is called a \textit{generalized pseudo-Anosov} map if there exist
	\begin{enumerate}
	\item a finite $\varphi -$invariant set $\Sigma$;
	\item a pair $(\mathcal{F}^u , \mu ^u) , \ (\mathcal{F}^s ,\mu ^s)$ of transverse measured foliations of $S\backslash \Sigma$ with \textit{countably many pronged singularities}, which accumulate on each point of $\Sigma $ and have no other accumulation points;
	\item  	a real number $\lambda > 1$; such that     
    $$\varphi (\mathcal{F}^u,\mu^u) = (\mathcal{F}^u,\lambda \mu^u) \text { and } \ \varphi (\mathcal{F}^s,\mu^s) = (\mathcal{F}^s,\frac{1}{\lambda} \mu^s).$$     
    \begin{figure}[h]
    \centering
    \includegraphics[width=.9\textwidth]{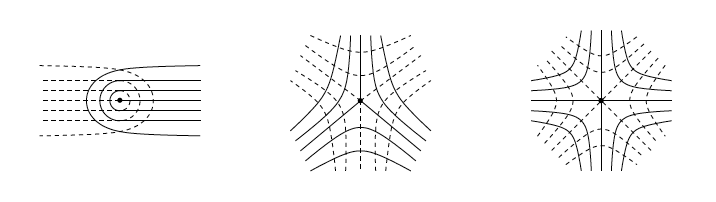}
    \caption{Pronged singularities of the invariant foliations.}
    \label{Pronged-sing}
\end{figure} 
	\end{enumerate}
\end{definition}   
\subsection{Complex Dynamics and Hubbard trees}
Consider the family of quadratic polynomials $f_c(z):=z^2+c,$ where $c\in \mathbb {C}.$ For each parameter $c,\, f_c$ has a unique critical point $z=0.$ We recall the following definitions where we refer to \cite{DH} for more details:
\begin{definition}
	The \textit{filled Julia set $K(f_c)$} of $f_c$  is given by
$$K(f_c):=\{z\in \mathbb{C}: f^n_c (z) \kern.em\not\kern-.em\rightarrow \infty \}.$$
\end{definition}
  \begin{definition}
  	The \textit{Julia set $J(f_c)$} of $f_c$ is the boundary of the filled Julia set, i.e. $J(f_c):=\partial K(f_c).$
  \end{definition}
  \begin{definition}
  The \textit{Mandelbrot set $M$} is given by
$$M:=\{c\in \mathbb{C}: J(f_c)\  \text{is connected} \}.$$	
  \end{definition}
The complement of the Mandelbrot set is a non-empty, open set, and simply connected in the Riemann sphere $\hat {\mathbb{C}}.$ Consider the unique Riemann map $\Phi:\mathbb{C}\backslash \bar {\mathbb{D}} \rightarrow \mathbb{C}\backslash M$ with $\Phi (\infty)=\infty$ and $\Phi ^{'} (\infty) >0.$
We can define the external ray as follows:
\begin{definition}
 For $\theta \in \mathbb{R}/\mathbb{Z},$ we define the \textit{external ray at} $\theta$ to be the set $R_M(\theta):=\Phi (\{\rho e^{2\pi i\theta}, \ \rho >1\}).$
 An external ray $R_M(\theta)$ is said to \textit{land at $x$} if $$\underset {\rho \rightarrow 1^{+}} \lim {\Phi (\rho e^{2\pi i\theta})}=x.$$
\end{definition}
  \begin{figure}[h]
    \centering
    \includegraphics[width=.5\textwidth]{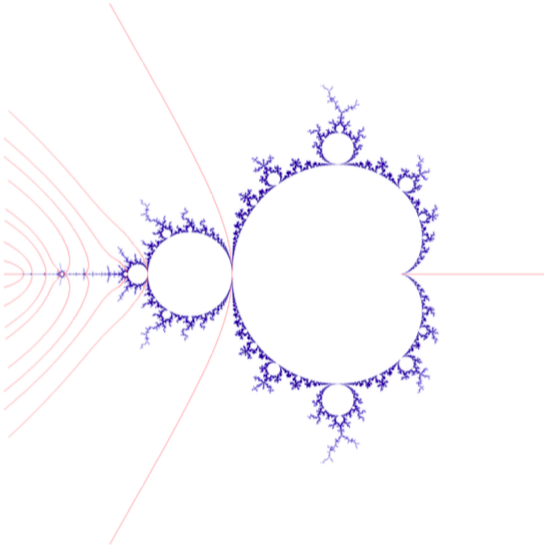}
    \caption{External rays.}
    \label{External-rays.}
\end{figure} 
\begin{definition}
A polynomial $f:\mathbb{C} \rightarrow \mathbb{C}$ is said to be \textit{post-critically finite} if the forward orbit of every critical point of $f$ is finite. 
\end{definition}
Recall a rational angle $\theta\in \mathbb{Q}/\mathbb{Z}$ determines a post-critically finite map $f_\theta(z) := z^2 + c_\theta.$ 
If $\theta$ is pre-periodic for the doubling map, then the external ray of angle $\theta$ lands at a post-critically finite parameter that we denote as $c_\theta$. 
If $\theta$ is purely periodic for the doubling map, then the external ray lands at the root of a hyperbolic component, and we let $c_\theta$ be the centre of such component.

\begin{definition}\label{Hubbard} 
Let $f$ be a post-critically finite quadratic polynomial with critical point $c_0$ and let $c_i:=f^i(c_0)$. We define the \textit{Hubbard tree} $T$ of $f$ \textit{Hubbard tree} of $f$ to be the union of the regulated arcs $$T=	\bigcup_{n,m \geq 0} [c_n,c_m].$$
It is a forward invariant subset of the filled Julia set $K(f)$ that contains the critical orbit.
\end{definition}

\begin{example}\label{Example}
Let $f$ be a quadratic polynomial with the characteristic angle $\theta =\frac{1}{5}.$ Then the critical point $c_0$ has period $p=4.$ 
Then looking at the Julia set of $f,$ in Figure \ref{Hubtree}, we can see the Hubbard tree which is the union of the regulated arcs $[c_1,c_3]$ and $[c_2,c_3]$. Also we can see the external rays that land at the roots of the Fatou components containing the elements of the critical orbit. Note that the angles $\frac {\theta}{2}=\frac {1}{10}$ and $\frac {\theta +1}{2}=\frac {3}{5}$ both land on the boundary of the critical Fatou component containing the point $c_0=c_4$.
\begin{figure}[h]
\centering 
 \includegraphics[width=.5\textwidth]{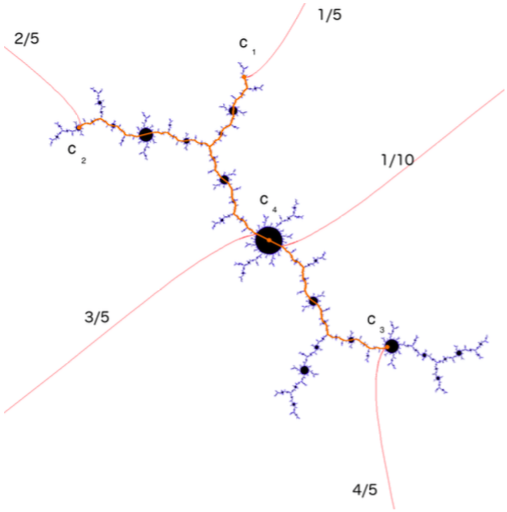}
 \caption{Hubbard tree for post-critically finite quadratic polynomial with angle $\theta =\frac{1}{5},$ and $p=4$.} 
  \label{Hubtree}
 \end{figure}
 \end{example}
  \subsection{Thickening the Hubbard Tree}
  \begin{definition}
  We define an \textit{$n$-star} $X$ to be a topological space homeomorphic to a disc with $2n$ marked points on the boundary denoted in counter-clockwise order as $p_i, i=1, \dots, 2n$. We define the \textit{inner boundary} $\partial_{in} X$ of $X$ as the subset of the boundary of $X$ given by the union of the arcs $$\partial_{in} X := \bigcup_{i=1}^n[p_{2i-1}, p_{2i}].$$ 
  Similarly, we define the \textit{outer boundary} $\partial_{out} X$ of $X$ as the union $$\partial_{out} X := \bigcup_{i=1}^{n}[p_{2i}, p_{2i+1}]$$ where $p_{2n+1}=p_1.$
  We call a $2$-star a \textit{rectangle}.
  \end{definition}
   \begin{figure}[h]
     \centering
     \begin{subfigure}[b]{0.45\textwidth}
         \centering
         \includegraphics[width=\textwidth]{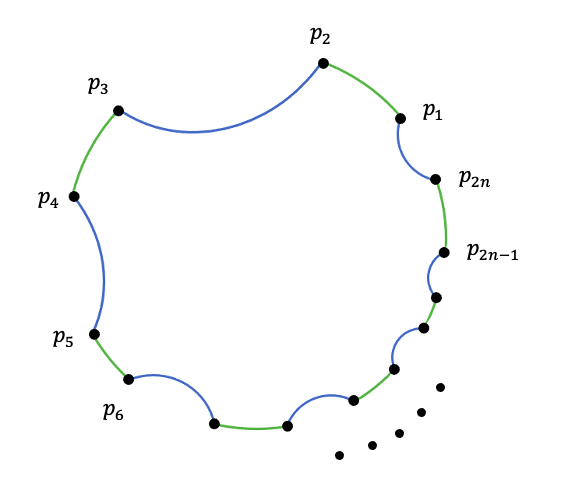}
         \caption*{$n$-star.}
     \end{subfigure}
     \hfill
     \begin{subfigure}[b]{0.4\textwidth}
         \centering
         \includegraphics[width=\textwidth]{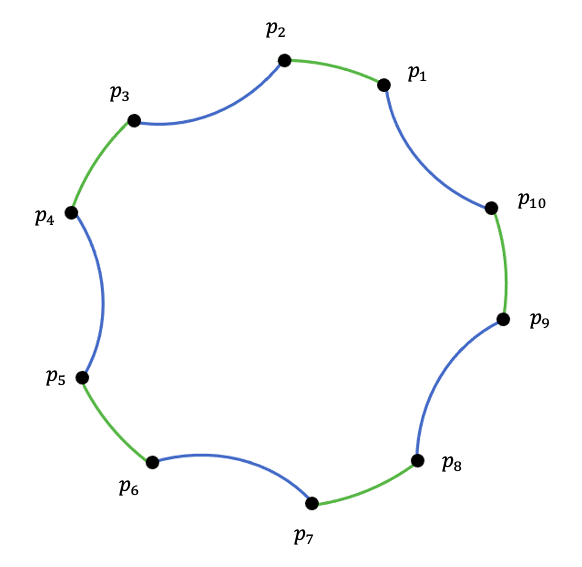}
         \caption*{$5$-star.}
     \end{subfigure}
     \caption{An $n$-star and a $5$-star with inner boundary in green and outer boundary in blue.}
        \label{n-star}
    \end{figure}

  We now give the definition of thick tree:
  \begin{definition}
 A \textit{thick tree} is a closed topological 2-disc $\mathbb{T}$ consisting of junctions, sides. 
 
  \textit{Junctions} are homeomorphic to a closed 2-disc and are of two types:
  \textit{end-junctions} and  \textit{inner-junctions} (representing the vertices and the post-critical set of the Hubbard tree, respectively).
 The boundary of the junctions are divided into two parts. One part intersects the boundary of the thick tree and the other part intersects the boundary of a side.  
 
\textit{Sides} are $n$-stars; moreover, every component of their inner boundary is contained in the boundary of a junction, and every component of their outer boundary is contained in the boundary of $\mathbb{T}$. For $n=2$, they are called \textit{simple sides} and represent the thickening of the edges of the Hubbard tree. For $n>2$, $n$-stars represent a thickening of a neighbourhood of an $n$-valence branch point in the Hubbard tree. 

 \end{definition} 
 \begin{definition}
 	 We say that the $n$-star $X$ \textit{connects} the junctions $J_1, \dots, J_k$ if the components of the inner boundary of $X$ are contained in the boundary of the junctions $J_1, \dots, J_k$.

 \end{definition}
\begin{figure}[h]
\centering
\includegraphics[width=.4\textwidth]{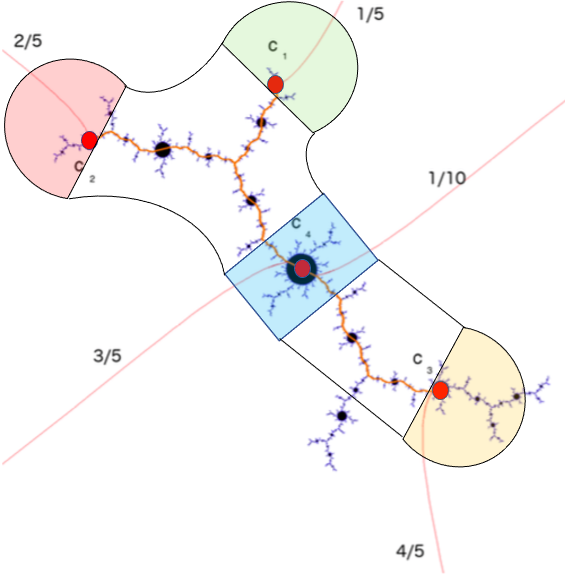}
 \caption{Thick Hubbard tree.} 
 \label{ThickH-Tree}
 \end{figure}
Figure \ref{ThickH-Tree} shows a thick Hubbard tree with four junctions (blue, green, red, yellow), one $3$-star (connecting the blue, green, and red junctions), and one simple side (connecting the blue and yellow junctions).

We define a map on the thick tree $\mathbb{T}$ as follows:
\begin{definition}
A \textit{thick tree map} is a continuous orientation-preserving map $F :\mathbb{T}  \rightarrow \mathbb{T} $ \ such that: 
      \begin{enumerate}   
      \item $F$ is homeomorphism onto its image;
       \item if $J$ is a junction of $\mathbb{T}$ then $F(J)$ is contained in a junction;
       \item if $J$ and $J'$ are junctions such that $F(J) \subseteq J'$ then $F(\partial J \backslash \partial \mathbb{T}) \subseteq \partial J' \backslash \partial \mathbb{T}$;
\item if $X$ is an $n$-star connecting junctions $J_1, \dots, J_k$, then $F(X)$   is an $n$-star connecting $F(J_1), \dots, F(J_k)$.
       \end{enumerate}
   \end{definition}
       
 Moreover, Figure \ref{Thick-map} shows a thick tree map of a Hubbard tree that maps the critical junction in blue to the green junction; it maps the green junction to the red junction and the red junction to the yellow junction; and it pulls the yellow junction to the blue one.     
\begin{figure}[h]
\centering
\includegraphics[width=.3\textwidth]{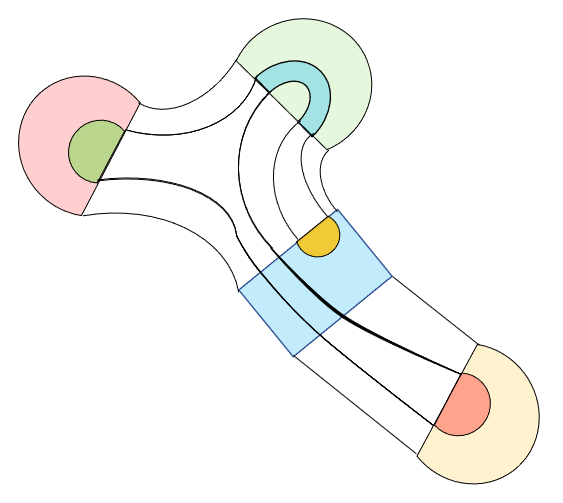}
 \caption{Thick tree map.}
 \label{Thick-map}
    \end{figure}
\subsection{Tree-Like Train Tracks}
\begin{definition}
We define a \textit{tree-like train track} to be a family of vertices and curves embedded on a surface such that the following hold:
\begin{enumerate}
\item there are finitely many vertices of two types, called \textit{switches} and \textit{branch points};
\item there are countably many curves of two types called \textit{edges} and \textit{loops};
\item away from vertices, the curves are smooth and do not intersect;
\item the endpoints of each curve are vertices; 
\item if the endpoints of a curve are the same point, the curve is called a \textit{loop}. Otherwise, it is called an \textit{edge};
\item at branch points only edges can meet and at switches edges and loops can meet;
\item at each switch, countably many curves meet with a unique tangent line, with one edge entering from one direction and the remaining curves entering from the other direction;
\item the union of all edges is a topological tree.
\end{enumerate}	
\end{definition}
 \begin{figure}[h]
\centering
\includegraphics[width=.3\textwidth]{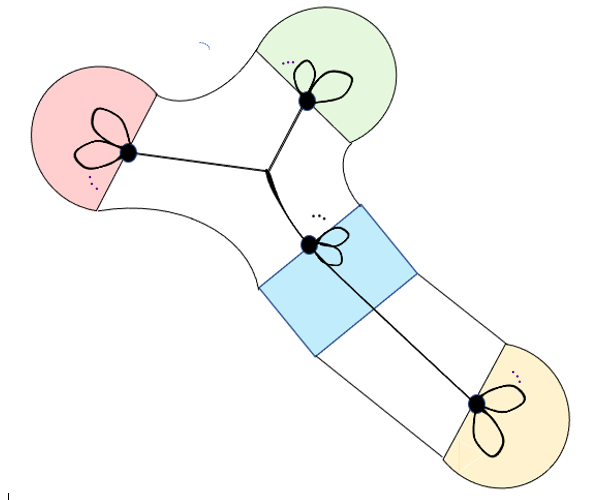}
\caption{Tree-like train track.}
\label{train-track}
\end{figure}  

\section{Main Theorem} \label{Main}
Consider a quadratic polynomial $f:\mathbb{C} \rightarrow \mathbb{C}$ with critical point $c_0$ of period $p$.
Let $T$ be a Hubbard tree of $f$ such that it is non-degenerate, i.e. $c_0$ is not an endpoint of $T$. The critical point divides the tree into two connected components which we call \textit{half-trees}. Denote the half-tree that contains $f(c_0)$ as the \textit{upper branch} $B_u$ and the other one as the \textit{lower branch} $B_l$.
\begin{definition}
The tree $T$ is said to be \textit{crossing free} if the embedding of $T$ in the plane $\mathbb{C}$ is such that the image $f(B_l)$ of the lower branch can be isotoped, while fixing the endpoints of $T$, into a tree that does not intersect the interior $\overset{\circ}f(B_u)$ of the image of the upper branch.
\end{definition}
  \begin{figure}[h]
     \centering
     \begin{subfigure}[b]{0.45\textwidth}
         \centering
         \includegraphics[width=\textwidth]{Example1}
         \caption*{Crossing-free Hubbard tree.}
     \end{subfigure}
     \hfill
     \begin{subfigure}[b]{0.45\textwidth}
         \centering
         \includegraphics[width=\textwidth]{Example3}
         \caption*{Hubbard tree with crossing.}
     \end{subfigure}
     \caption{}
        \label{Examples}
    \end{figure}

We will start with some lemmas about crossing-free Hubbard trees.
\begin{lemma}\label{L1}
Let $T$ be a crossing-free non-degenerate Hubbard tree with a branch point $P$. Then the branch point $P$ cannot be mapped to a fixed branch point.
\end{lemma}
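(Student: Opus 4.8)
Proof proposal for Lemma \ref{L1}.

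The plan is to argue by contradiction, proving the logically equivalent statement that a crossing-free non-degenerate Hubbard tree has \emph{no fixed branch point at all}. This is equivalent to the lemma: a fixed branch point $Q$ is in particular a branch point with $f(Q)=Q$ a fixed branch point, so the lemma applied to $P=Q$ forbids it; conversely, if $T$ has no fixed branch point then no branch point can be mapped to one. So suppose $Q$ is a fixed branch point and aim for a crossing. First I would pin down $Q$ and its local dynamics. Since $f$ is quadratic it has only the two fixed points $\alpha$ and $\beta$, and $\beta$ is never a branch point of $T$ (it is a leaf of, or lies outside, the Hubbard tree), so $Q=\alpha$. As $Q$ is fixed and $Q\neq c_0$, the map $f$ is a local homeomorphism at $Q$, hence it permutes the local arms of $T$ at $Q$; since these arms correspond to the external rays landing at $\alpha$, which are cyclically permuted by angle doubling, this permutation $\sigma$ is a single $q$-cycle with nonzero rotation number $\ell/q$, $\gcd(\ell,q)=1$, and $q=\mathrm{val}(Q)\geq 3$.

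Next I would locate the critical data relative to $Q$. The critical point lies in the interior of a unique arm $A$ of $Q$. The regulated arc $[Q,c_0]$ lies in $A$ with $c_0$ as an endpoint, so $f$ is injective on it; because $f$ rotates the germ of $A$ at $Q$ to the germ of $\sigma(A)$, we get $f([Q,c_0])=[Q,c_1]$, an arc leaving $Q$ into $\sigma(A)\neq A$, so $c_1\in\sigma(A)$ and the arc $[c_0,c_1]$ runs through $Q$. Hence $Q$ lies on the $c_1$-side of $c_0$; since $T$ is non-degenerate, $c_0$ has valence $2$, and therefore the lower branch $B_l$ is precisely the sub-tree of $A$ lying beyond $c_0$, while $\overline{B_u}$ consists of $Q$, all the arms $\neq A$, and the segment $[Q,c_0]$. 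A key consequence is that $f(\overline{B_u})$ still has $Q$ as a branch point of full valence $q$: the $q-1$ arms of $Q$ contained in $B_u$ map onto sub-trees reaching into all arms of $Q$ except $\sigma(A)$, while $f([Q,c_0])=[Q,c_1]$ reaches into $\sigma(A)$; each of these images has $Q$ as a leaf, so $f(\overline{B_u})$ occupies a germ of every one of the $q$ arms at $Q$.

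The third step analyzes $f(\overline{B_l})$. The two arms of $T$ at $c_0$ are carried by $f$ into directions at $c_1$ obtained by doubling; since $-\alpha=f^{-1}(\alpha)\setminus\{\alpha\}$ is the reflection of $\alpha$ through $c_0$, the arm of $c_0$ running into $B_l$ and the arm running toward $Q$ are diametrically opposite, and hence both are carried to the \emph{same} arm-germ at $c_1$, the germ of $[Q,c_1]$. Thus $f(\overline{B_l})$ emanates from $c_1$ toward $Q$ and overlaps $f(\overline{B_u})$ along an initial sub-arc of $[Q,c_1]$; moreover a short argument shows $c_1$ must be a leaf of $T$, since otherwise the part of $\sigma(A)$ beyond $c_1$ would meet neither $f(\overline{B_u})$ (whose only portion inside $\sigma(A)$ is $[Q,c_1]$) nor $f(\overline{B_l})$ (which enters $\sigma(A)$ only through $c_1$, heading toward $Q$), contradicting $f(\overline{B_u})\cup f(\overline{B_l})=f(T)=T$. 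So $f(\overline{B_l})$ is a sub-tree of $T$ that runs from the leaf $c_1$ into a neighborhood of $Q$ and, being connected and covering, together with $f(\overline{B_u})$, all of $T$, is forced to pass through (a neighborhood of) $Q$; in fact when $-\alpha\in T$ the point $Q$ is again a branch point of $f(\overline{B_l})$, so $f(\overline{B_u})$ and $f(\overline{B_l})$ completely overlap near $Q$.

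Finally I would derive the crossing: $f(\overline{B_u})$ realizes $Q$ as a valence-$q$ branch point whose arms run out to distinct leaves of $T$, and $f(\overline{B_l})$ threads past $Q$ among those arms, so in any planar embedding of $T$ the portion of $f(B_l)$ near $Q$ is linked with $f(B_u)$; since the leaves of $T$ are held fixed, no ambient isotopy can pull $f(B_l)$ off the interior of $f(B_u)$ there, contradicting crossing-freeness. I expect the main obstacle to be making this last step rigorous: one must show, using that the thick-tree map is orientation-preserving and acts at the fixed junction $Q$ by the nontrivial rotation $\ell/q$, together with $q\geq 3$, that the passage of $f(B_l)$ past $Q$ genuinely cannot be isotoped into a single complementary sector of $f(B_u)$ rel the endpoints of $T$ (for $q=2$ this linking disappears, consistently with a valence-$2$ fixed point not being a branch point); I would also have to dispatch, along the same lines, the auxiliary case $-\alpha\notin T$.
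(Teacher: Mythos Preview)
Your reformulation---that the lemma is equivalent to ``$T$ has no fixed branch point''---is correct, and your outline is plausible, but you are working much harder than the paper does, and the extra work is exactly where your acknowledged gaps lie.

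The paper's argument is very short and exploits the hypothesis in a way your reformulation discards. It keeps \emph{both} the branch point $P$ and the fixed branch point $P_0=f(P)$ in play. Since $f(P)=P_0=f(P_0)$ and $f$ is injective on each half-tree, $P$ and $P_0$ lie in different half-trees. A small neighbourhood of $P$ in $T$ is an $n$-star with $n\geq 3$ (because $P$ is a branch point), and $f$ carries it homeomorphically to an $n$-star at $P_0$; likewise a small neighbourhood of $P_0$ maps to an $n$-star at $P_0$. Hence $f(B_u)$ and $f(B_l)$ \emph{each} contain a star with at least three arms centred at $P_0$, and two such overlapping stars cannot be isotoped apart rel the endpoints of $T$. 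That is the whole proof.

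By collapsing to the case $P=Q=P_0$ you lose precisely the datum that makes the paper's argument immediate: a second branch point in the \emph{other} half-tree that also lands on $P_0$. You then have to rebuild this by hand, via the rotation at $\alpha$, the location of $c_0$ in an arm $A$, the analysis of how $f(\overline{B_l})$ threads past $Q$, the auxiliary claim that $c_1$ is a leaf, and the case split on whether $-\alpha\in T$. None of this is needed. Your final step---turning ``$f(B_l)$ passes near $Q$ among the arms of $f(B_u)$'' into an honest obstruction to the isotopy---is the same topological fact the paper invokes, but you reach it only after a long detour, and in the case $-\alpha\notin T$ it is not clear your outline produces a star in $f(B_l)$ at $Q$ at all.

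So: the approach is different and not wrong in spirit, but the paper's route is both shorter and avoids every gap you flag. The one-line moral is that you should use $P$ and $P_0$ together rather than reducing to $P=P_0$.
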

\begin{proof}
	Assume by contrary that there is a fixed branch point $P_{0}$ such that $f(P) := P_0.$
	Note that $P$ and $P_0$ cannot be in the same half-tree since the restriction of $f$ to each of the half-trees is injective.
	Looking at the images of the neighbourhoods of $P$ and $P_0$, we have the following:
	\begin{itemize}
	\item 	the images of neighbourhoods of $P$ and $P_0$ are neighbourhoods of $P_0$
	\item any sufficiently small neighbourhood of $P_0$ is homeomorphic to a star $S$ with $n>2$ branches 
	\item there is no homotopy that takes the star away from itself while staying in an $\epsilon$ neighbourhood of the tree.	
 	\end{itemize}
This contradicts the crossing-free assumption.
\end{proof}
\begin{lemma} \label{L2}
In a Hubbard tree, every branch point is pre-periodic. 
Moreover, if the tree is crossing free, then every pre-fixed branch point is fixed.
\end{lemma}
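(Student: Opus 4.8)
The plan is to prove the two assertions in turn, using throughout that the Hubbard tree $T$ is a \emph{finite} topological tree with $f(T)\subseteq T$ and that $f$ is locally injective on $\mathbb{C}$ away from the critical point $c_0$.

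\emph{Pre-periodicity of branch points.} First I would note that if $x\neq c_0$ is a branch point of $T$, then $f$ is injective on a small neighbourhood of $x$, so it sends the (at least three) arc-germs of $T$ at $x$ to that many distinct arc-germs of $T$ at $f(x)$; hence $f(x)$ is again a branch point. Since moreover $f(c_0)=c_1$ lies in the finite, forward-invariant post-critical set $\{c_1,c_2,\dots\}$, the set $V$ consisting of all branch points together with the post-critical set is finite and satisfies $f(V)\subseteq V$. A finite forward-invariant set consists entirely of pre-periodic points, so every branch point is pre-periodic.

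\emph{Pre-fixed branch points are fixed.} Now suppose $T$ is crossing-free and let $P$ be a pre-fixed branch point, so that $f^n(P)$ is a fixed point for some minimal $n\ge 0$; I want to show $n=0$. First I would check that none of $P,f(P),\dots,f^{n-1}(P)$ equals $c_0$: otherwise $c_0$ itself would be pre-fixed, hence (using the super-attracting hypothesis of Theorem~\ref{theorem1}, $c_0$ is periodic, so this forces $c_0$ to be a fixed point; in general one uses non-degeneracy directly), and the only quadratic with $c_0$ fixed is $z\mapsto z^2$, whose Hubbard tree is degenerate. By the valence argument above it then follows that $P,f(P),\dots,f^n(P)$ are all branch points; in particular $Q:=f^n(P)$ is a \emph{fixed branch point}, and $f^{n-1}(P)$ is a branch point mapping onto $Q$.

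Finally I would invoke Lemma~\ref{L1}. Read carefully, what its proof establishes is that a branch point which maps onto a fixed branch point $Q$ must coincide with $Q$: if it did not, it and $Q$ would have to lie in the same half-tree (impossible, since $f$ is injective on each half-tree and $Q$ is fixed) — unless they lie in different half-trees, which is exactly the configuration ruled out by the crossing-free hypothesis in the proof of Lemma~\ref{L1}. Applying this to $f^{n-1}(P)$ gives $f^{n-1}(P)=Q$, which is fixed; if $n\ge 1$ this contradicts the minimality of $n$. Hence $n=0$, i.e. $P$ is fixed. The main obstacle I anticipate is not the bookkeeping but extracting the correct statement from Lemma~\ref{L1} (its conclusion is really ``a branch point lying over a fixed branch point equals it'', not the literal ``nothing lies over a fixed branch point'') and making sure the critical point cannot enter the picture, i.e. that the forward orbit of a pre-fixed branch point avoids $c_0$ — both of which are forced by non-degeneracy together with the super-attracting setting, but must be invoked explicitly.
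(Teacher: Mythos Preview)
Your proof is correct and follows essentially the same route as the paper: branch points map to branch points inside a finite set (giving pre-periodicity), and the second claim is deduced from Lemma~\ref{L1} by taking the last iterate before the fixed branch point. Your explicit unpacking of Lemma~\ref{L1}---that its real content is ``a branch point mapping onto a fixed branch point must equal it''---and your check that the forward orbit avoids $c_0$ are details the paper leaves implicit, but the argument is the same.
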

\begin{proof}
	Note that in any Hubbard tree, every branch point is mapped to a branch point. Since there are finitely many branch points, every branch point is pre-periodic. 
The second claim follows from Lemma \ref{L1}.
\end{proof}
\begin{conjecture}
	If the Hubbard tree is crossing-free, then there is only one branch point.
\end{conjecture}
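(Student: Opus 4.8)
The plan is to establish the (mildly sharpened) statement that a non-degenerate crossing-free Hubbard tree has \emph{at most one} branch point --- exactly one unless it is a single arc. By Lemma~\ref{L2} every branch point is pre-periodic, and every pre-fixed branch point is fixed; so it suffices to prove (i) $T$ carries at most one fixed branch point, and (ii) every periodic branch point of $T$ is in fact fixed. Granting (i) and (ii), an arbitrary branch point is pre-periodic (Lemma~\ref{L2}), so it eventually maps onto a periodic branch point, which by (ii) is fixed, hence by (i) is the unique fixed branch point; thus the original point is pre-fixed, hence by Lemma~\ref{L2} itself fixed, hence equal to that unique one. This yields the conclusion.

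For (i): the polynomial $f(z)=z^2+c$ has exactly two fixed points, $\alpha$ and $\beta$, and $\beta$ is the non-separating fixed point (the landing point of the $0$-ray), so it separates no pair of points of $K(f)$, lies in the interior of no regulated arc joining two post-critical points, and --- not being post-critical --- does not lie on $T$ at all. Hence $\alpha$ is the only fixed point of $f$ meeting $T$, so at most one branch point is fixed. I would also record here the preliminary observation that the critical point $c_0$ is never a branch point of a non-degenerate Hubbard tree: for a finite tree one has $\sum_{P\ \mathrm{branch}}(\deg P-2)=\#\{\mathrm{endpoints}\}-2$, the endpoints of $T$ lie among the $p$ points of the critical orbit and exclude $c_0$, so $T$ has at most $p-3$ branch points; since $f$ sends branch points to branch points, were $c_0$ a branch point the whole critical orbit $c_0,\dots,c_{p-1}$ would consist of branch points, forcing at least $p$ --- a contradiction. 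In particular $f$ is a local homeomorphism at every branch point of $T$.

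The substance is (ii). Suppose $P_0\to P_1\to\cdots\to P_{q-1}\to P_0$ is a cycle of branch points with $q\ge 2$. By the previous paragraph no $P_i$ equals $c_0$, so $f$ is locally injective at each $P_i$; hence the valences are non-decreasing around the cycle, so all equal to some $v\ge 3$, and $f$ carries the cyclically ordered collection of prongs at $P_i$ bijectively and in an orientation-preserving way onto that at $P_{i+1}$. I would then pass to the convex hull $\widehat T\subseteq T$ of $\{P_0,\dots,P_{q-1}\}$. When $c_0\notin\widehat T$, the hull lies in a single half-tree, so $f$ restricts to a homeomorphism of $\widehat T$ permuting the $P_i$ in one $q$-cycle; by the fixed point property of finite trees this homeomorphism still has a fixed point, which is a fixed point of $f$, hence (by (i) and $\beta\notin T$) is $\alpha$, distinct from every $P_i$. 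The aim is then to thicken $\widehat T$ together with its outgoing prongs inside $T$, transport the prong data around the $q$-cycle, and argue as in Lemma~\ref{L1} that the $q$-fold cyclic identification of the thickened branch points cannot be realised compatibly with a planar embedding without forcing $f(B_u)$ and $f(B_l)$ to overlap in a way that no endpoint-fixing isotopy removes --- contradicting crossing-freeness.

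The hard part is exactly this last step. A purely numerical prong count does not suffice: at $P_0$ and $P_1$ one only learns that the prongs coming from $f(B_u)$ and from $f(B_l)$ can be apportioned consistently, so the obstruction must instead come from the interaction of the cyclic order of the prongs around the cycle with the planar embedding. Moreover the case $c_0\in\widehat T$ must be handled separately: there $f$ folds $\widehat T$, one no longer has $f(\widehat T)=\widehat T$, and one must control how the fold at $c_0$ disturbs the cyclic orders at the $P_i$ before feeding this into the crossing-free obstruction. Since cycles of branch points of length $\ge 2$ do occur for suitable non-crossing-free quadratic Hubbard trees, the planar hypothesis is genuinely being used here rather than some combinatorial feature of Hubbard trees; this is why the statement is recorded as a conjecture.
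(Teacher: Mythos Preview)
The paper offers no proof of this statement: it is recorded as a \emph{Conjecture} immediately after Lemma~\ref{L2}, with no argument given. So there is nothing in the paper to compare your proposal against; you are attempting something the author leaves open, and indeed your last sentence acknowledges this.

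Your reduction is sound: Lemma~\ref{L2} does reduce the question to (i) at most one fixed branch point and (ii) no periodic cycle of branch points of length $\ge 2$. Part (i) is essentially the standard fact that only the $\alpha$-fixed point can lie on $T$, and your argument for it is fine. One small gap: in your auxiliary claim that $c_0$ is never a branch point, you invoke ``$f$ sends branch points to branch points,'' but this fails precisely at $c_0$, where $f$ has local degree $2$ and can collapse a valence-$v$ point to one of valence $\lceil v/2\rceil$; so the inference that the whole critical orbit would consist of branch points is not justified as written. The conclusion $\mathrm{val}_T(c_0)\le 2$ is nonetheless a standard fact for quadratic Hubbard trees, proved via the involution $z\mapsto -z$ on $f^{-1}(T)$ rather than by your counting argument.

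For (ii) you give an honest outline and correctly flag that the crux --- showing a $q$-cycle ($q\ge 2$) of $v$-pronged branch points forces an unremovable crossing of $f(B_u)$ and $f(B_l)$ --- is not established, and that the sub-case $c_0\in\widehat T$ is unhandled. That is an accurate assessment of where the difficulty lies; it is also exactly why the paper states this as a conjecture rather than a lemma. Your proposal is thus a reasonable plan of attack, not a proof, and should be presented as such.
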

Now we will give the main definition of extension of a tree map to a generalized pseudo-Anosov homeomorphism.
\begin{definition} \label{extension}
 Let $\varphi :S\rightarrow S$ be a generalized pseudo-Anosov homeomorphism of a surface $S$, let $\mathcal{F}$ be one of its invariant foliations, and let $f:T\to T$ be a Hubbard tree of a post-critically finite quadratic polynomial.
Let $T_0$ be the tree $T$ minus the endpoints.
 Suppose the following holds:
 \begin{enumerate}
 \item 	 there exists an open, connected, dense subset $S_0$ of $S$;
 \item the quotient map, given by collapsing each leaf of the restriction of $\mathcal{F}$ to $S_0$, yields a surjective, continuous map $\pi:S_0\to T_0$;
\item the following diagram commutes:

 \[
 \begin{tikzcd} S_1 \arrow{r}{\varphi} \arrow[swap]{d}{\pi} & S_0 \arrow{d}{\pi} \\  T_1 \arrow{r}{f} & T_0  
 \end{tikzcd} 
 \]
 where $S_1 := \varphi^{-1}(S_0)$ and $T_1 := f^{-1}(T_0)$. 
 \end{enumerate}
Then we say that $\varphi$ is an \textit{extension} of $f$. We say that a polynomial map $f$ is \textit{extendable} if there exists an extension $\varphi$ as above.
 \end{definition}

Now we will prove the first part of our main theorem.
\begin{theorem}\label{main-theorem}
Let $f$ be a post-critically finite, superattracting quadratic polynomial and let $T$ be its Hubbard tree. If $f:T\to T$ is crossing-free, then it is extendable to a generalized pseudo-Anosov homeomorphism $\varphi$.
 Moreover, if $\lambda$ is the dilatation of $\varphi$, then $\log \lambda$ equals the core entropy of $f$.
\end{theorem}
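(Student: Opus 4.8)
The plan is to follow the five-step construction outlined in the introduction, making each step precise and showing the resulting object is a genuine generalized pseudo-Anosov map whose dilatation is controlled by the core entropy.

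\textbf{Step 1: thickening and the thick tree map.} Starting from the crossing-free, non-degenerate Hubbard tree $T$, I would first choose a planar embedding realizing the crossing-free condition, then build a thick tree $\mathbb{T}$ by replacing each vertex (endpoint or post-critical point) by an end- or inner-junction and each edge by a simple side, and each $n$-valence branch point by an $n$-star; by Lemma \ref{L1} and Lemma \ref{L2} the branch structure is dynamically tame (at most one periodic branch point up to the combinatorics actually needed), so no pathologies arise. I would then define a thick tree map $F:\mathbb{T}\to\mathbb{T}$ that covers $f:T\to T$ in the sense that collapsing the transverse direction recovers $f$. The key geometric input here is precisely crossing-freeness: it guarantees that the image of the thickened lower branch can be pushed off the interior of the image of the thickened upper branch, so that $F$ can be taken to be an embedding (a homeomorphism onto its image, folding $\mathbb{T}$ over itself) rather than forcing genuine overlaps; without it the folding would be non-embeddable and the later rectangle identifications would fail to be injective.

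\textbf{Steps 2–3: the tree-like train track and the transition matrix.} Since $F$ folds $\mathbb{T}$, I would record the folding by carrying a tree-like train track $\tau$ inside $\mathbb{T}$ — the edges forming a topological tree modeled on $T$, with switches and loops recording where the thick map creates infinitely many folds accumulating on the post-critical junctions. I would check that $F$ (after isotopy) is carried by $\tau$ and induces a self-map, giving a transition matrix $M$ on the (finitely many, by periodicity of the post-critical set) edge classes. Standard Perron–Frobenius theory gives a leading eigenvalue $\lambda\ge 1$; because $f$ is superattracting with a genuinely folding (non-monotone) return map, $M$ is Perron–Frobenius and $\lambda>1$, and by construction $M$ is exactly the transition matrix computing the topological entropy of $f|_T$, so $\log\lambda$ equals the core entropy of $f$. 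This also yields a left and a right positive eigenvector.

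\textbf{Steps 4–5: building and identifying rectangles, and verifying the foliations.} Using the eigenvector entries as widths and heights, I would assign to each train-track edge a Euclidean rectangle, glue them along the switch/branch incidences to form a surface $S$ (with a singular Euclidean structure — the accumulating folds produce countably many pronged singularities accumulating on the finitely many points $\Sigma$ coming from the post-critical cycle), and define $\varphi:S\to S$ by the affine maps that on each rectangle stretch by $\lambda$ in the width direction and contract by $1/\lambda$ in the height direction, composed with the combinatorial identification dictated by $M$. The horizontal and vertical line fields give the transverse measured foliations $(\mathcal{F}^u,\mu^u)$, $(\mathcal{F}^s,\mu^s)$; I would verify $\varphi$ is a well-defined homeomorphism (injectivity of the gluing is where crossing-freeness is used one last time), that the singularities accumulate exactly on $\Sigma$ and nowhere else, and that $\varphi_*(\mathcal{F}^u,\mu^u)=(\mathcal{F}^u,\lambda\mu^u)$ and $\varphi_*(\mathcal{F}^s,\mu^s)=(\mathcal{F}^s,\tfrac1\lambda\mu^s)$. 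Finally, taking $S_0\subset S$ to be the complement of the stable leaves through $\Sigma$ and $\pi:S_0\to T_0$ the leaf-collapse of $\mathcal{F}^s$, I would check the commuting square in Definition \ref{extension}, so $\varphi$ is an extension of $f$.

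\textbf{Main obstacle.} The delicate part is not the linear algebra but showing that the folded thick map can be realized, via the tree-like train track, as an honest homeomorphism of the glued surface with the foliation singularities accumulating \emph{only} on $\Sigma$ — i.e. that the countably many pronged singularities produced by iterated folding organize themselves into finitely many accumulation points rather than dense limit sets. This is exactly where the crossing-free hypothesis does the real work, and making the accumulation statement precise (and checking the map is injective, not just locally so) is the technical heart of the proof.
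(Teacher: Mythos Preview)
Your outline follows the same five-step construction as the paper, and the architecture is right, but you have misidentified the technical heart. You treat ``glue them along the switch/branch incidences'' as routine and flag the accumulation of singularities as the main obstacle; in the paper the accumulation is dispatched by a short geometric-series computation, while the bulk of the argument (four lemmas embedded in the proof) is devoted to showing that the rectangles \emph{can} be glued consistently around a branch point of valence $n$.

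Concretely: if the left-eigenvector entries $y_1,\dots,y_n$ are the widths of the $n$ rectangles meeting a branch point, gluing them around that point requires positive $z_1,\dots,z_n$ with $z_i+z_{i+1}=y_i$ (indices mod $n$). For $n$ odd the system is invertible and positivity follows from $y_i=\lambda y_{i+1}$. For $n$ even the coefficient matrix has rank $n-1$, so a solution exists only if $y_1-y_2+\cdots+y_{n-1}-y_n=0$. The paper proves this by showing that, under crossing-freeness, the transition matrix $M$ has $-1$ as an eigenvalue with eigenvector $v=(1,-1,\dots,1,-1,0,\dots,0)$; pairing $Mv=-v$ against $M^{\intercal}y=\lambda y$ forces $\langle v,y\rangle=0$. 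The verification that the relevant block of $M$ has only \emph{consecutive} $1$'s in each row---so that $Mv=-v$ actually holds---is precisely where the crossing-free hypothesis is used, not (as you suggest) in a final injectivity check. Periodic and pre-periodic branch points are handled by running the same argument for an iterate $f^k$ and pulling back. Without these lemmas Step~5 of your plan does not go through: the surface $S$ need not exist, because the eigenvector side-lengths are not automatically compatible around an even-valence branch point.
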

\begin{proof}
This is a proof by construction.
Let $T$ be a crossing-free non-degenerate Hubbard tree. Let $c_0$ be the critical point with period $p$. Since $T$ is a non-degenerate Hubbard tree, then $c_0$ is located in the middle of $T$ between the upper branch $B_u$ and the lower branch $B_l$. 

\begin{figure}[h!]
\centering
\includegraphics[width=.28\textwidth]{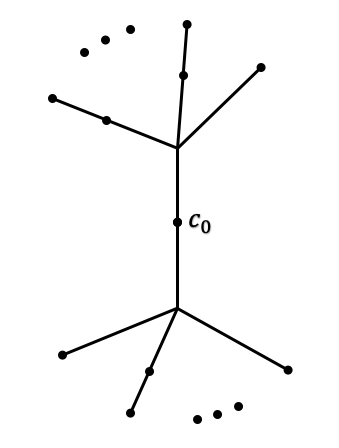}
\caption{ }
\end{figure} 

Since $T$ is a crossing-free Hubbard tree, then there exist:
 $$\Phi_u:f(B_u)\times [0,1] \rightarrow N_\epsilon(f(B_u)) \text { defined by }$$  $$\Phi_u(p,t)=p, \ \ \forall p\in \partial f(B_u), \ \ \forall t,$$ and 
$$\Phi_l:f(B_l)\times [0,1] \rightarrow N_\epsilon(f(B_l)) \text { defined by }$$ $$\Phi_l(p,t)=p, \ \ \forall p\in \partial f(B_l), \ \ \forall t,$$ such that $$\Phi_u(f(B_u)\backslash \partial f(B_u),1)\cap \Phi_u(f(B_l)\backslash \partial f(B_l),1)=\emptyset.$$
Now the construction will take the following steps:

\textbf{Step 1.} We thicken the tree $T$ into a thick tree $\mathbb{T}$ that consists of junctions $J$'s and sides $S$'s. Junctions are topologically discs that represent the critical orbit. So there are $p$ junctions in $\mathbb{T}$. In between junctions, there are sides $S$'s which are topologically rectangles such that the following holds:
\begin{itemize}
\item If the connected component $cc(\mathbb{T}\backslash \{J's,T\})$ 	contains a single edge of $T$ with no branch points, then $S=cc(\mathbb{T}\backslash \{J's,T\})$.
\item If $cc(\mathbb{T}\backslash \{J's,T\})$ contains a branch point in $T_\theta$ of valence $n$, then we connect the branch point to each of the boundary component with lines $l_i$, for $1\leq i \leq n$. Moreover, $cc(\mathbb{T}\backslash \{J's,T\})$  will contain $n$ sides $S_i, \ 1\leq i\leq n$ such that $S_i$ has $l_i \cup l_{i+1}$ as one side and a junction as the opposite side.

Around an $n$-branch point, $n$ sides form an $n$-star  
\end{itemize}
Note that in $\mathbb{T},$ there are $p$ junctions and $p+b-1$ sides, where $b$ is the number of branch points.

Now define $\tilde{f}: f(T) \rightarrow N_\epsilon(f(T))$ by 
\[ \tilde{f}(x) :=\begin{cases} 
       \Phi_u(x,1) &\text{if } x\in f(B_u) \\
       \Phi_l(x,1) & \text{if } x\in f(B_l) \\
     
   \end{cases}.
\]

 Starting with the thick tree $\mathbb{T}$, we embed the images $\tilde{f}(B_u)$ and $\tilde{f}(B_l)$ such that $J_{Red} :=\tilde{f}(B_u)\cap J_{f(c_0)}$ and $J_{Blue} :=\tilde{f}(B_l)\cap J_{f(c_0)}$ intersect at $f(c_0)$. We smoothen this point such that $J_{Red}\cup J_{Blue}$ is diffeomorphic to half a circle. 
 
\textbf{Step 2.} We define a thick map $F:\mathbb{T} \rightarrow \mathbb{T}$ such that it satisfies the following:
 \begin{enumerate}
 \item $F(\mathbb{T})\subseteq \mathbb{T}.$
 
 \item The junction $J_{c_0}$ around the critical point $c_0$ is mapped homeomorphically into the junction $J_{c_1}$ around the critical value $c_1$ such that $F(J_{c_0})\subseteq J_{c_1}$ and the two sides $J_{c_0,u}$ and $J_{c_0,l}$ of $\partial J_{c_0} \cap \overset{\circ}{\mathbb{T}}$ are mapped to disjoint segments of the one side of  $\partial J_{c_1} \cap \overset{\circ}{\mathbb{T}}$ in the following way:
 	\begin{itemize}
     \item  $F(J_{c_0,u})$ intersects $J_{Red}$ and $F(J_{c_0,l})$ intersects $J_{Blue}$,
      \item  $F:J_{c_0} \rightarrow F(J_{c_0})\subseteq J_{c_1}$ is an embedding,
 		\item  and the union $J_{Red} \cup J_{Blue}$ is contained in $F(J_{c_0})$.
 	\end{itemize}
 \item Junctions that do not contain the critical point $c_0$ are mapped  homeomorphically into junctions. More precisely, if $J_i$ contains $c_i$ with $i\in \{1,\dots, p-1\}$, then $F(J_i)$ contains $f(c_i)$ and  $F(J_i)\subseteq J_j$ for $j\in \{0,1,\dots, p-1\}$. There are three cases for such junctions:
   \begin{itemize}
     \item[] \textbf{Case 1.} If end junction, end point of $T$, is mapped to end junction, then the boundary $\partial$ is mapped a boundary $\partial$, and the rest of the junction is embedded in the junction.
     \item[] \textbf{Case 2.} If end junction is mapped to an inside junction, then the boundary $\partial $ is mapped the boundary part where it meets $\tilde{f}(B_\alpha).$
      \item[] \textbf{Case 3.} If an inside junction is mapped to an inside one, then the boundary $\partial$ sides will be mapped to the corresponding boundary sides such that they preserve the orientation of the original map.
    \end{itemize}
\item Sides between junctions will be mapped as follows:
\begin{enumerate}
  \item[] \textbf{Case 1.} If a side $S_i$ is connecting two junctions $J_m$ and $J_n$, then it is mapped to a side $F(S_i)$ that connects $F(J_m)$ and $F(J_n)$ such that:
    \begin{itemize}
      \item the inner boundary $\partial S_i \cap \overset{\circ}{\mathbb{T}}$ is mapped to $\partial F(J_m) \cap \overset{\circ}{\mathbb{T}}$ and $\partial F(J_n) \cap \overset{\circ}{\mathbb{T}}$,
      \item the outer boundary $\partial S_i \backslash (\partial S_i \cap \overset{\circ}{\mathbb{T})}$ is mapped to disjoint arcs that connect the boundary parts of the images of the junctions accordingly,
       \item and $F(S_i)$ is a rectangle that is a neighbourhood of $\tilde{f}(f(S_i\cap T)).$
    \end{itemize}
  \item[] \textbf{Case 2.} Around an $n$-branch point, $n$ sides form an $n$-star 
  
  . An $n$-star is mapped to an $n$-star, specifically, $n$ sides are mapped to $n$ rectangles each of which has $F(\partial S_i \cap \partial J_i)$ as one side.
 \end{enumerate}
\end{enumerate}
 
 Let $T$ be a tree connecting the critical orbit and representing the Hubbard tree $T_\theta$. 
 For each $S_i$ there is a homeomorphism $h_i: S_i \rightarrow [0,1]\times [0,1]$ with a rectangle so that $h_i(T\cap S_i)=\{\frac{1}{2}\}\times [0,1]$. That is the tree is sent to the middle horizontal line.
 Let $P:[0,1]\times [0,1]\rightarrow  \{1/2\}\times [0,1]$ be defined as
$$P(x,y) :=(\frac{1}{2},y).$$

Define $\pi_i: S_i \rightarrow T\cap S_i$ on each $S_i$ to be given by
$$\pi_i := h_i^{-1} \circ P \circ h_i$$

Define a continuous map $\xi_j: J_j \rightarrow J_j$ on each junction to be $$\xi_j :=\pi_i \text{ on } \partial J_j \cap \partial S_i$$ for every side $S_i$ that intersects the junction $J_j$, and so that the restriction of $\xi_j$ to the interior $J_j \setminus \cup (\partial S_i)$ is a homeomorphism onto its image.
 
 We define an isotopy map on $\mathbb{T}$ as follows:
 $$\psi :\mathbb{T} \rightarrow  \mathbb{T}$$ 
$$ \psi(z) := 
	\left \{ 
		\begin{array}{lll}
		\pi_i(z)& \mbox{if}\;\; z \text{ is in the side } S_i\\
		\xi_j(z)	 & \mbox{if}\;\; z \text{ is in the junction } J_j. 
		\end{array}
	\right.
$$ 
 
To keep track of the folding, we construct a tree-like train track as follows:

\begin{itemize}
\item First we let $\tau_0 :=(B_u \cup B_l)\setminus \overset{\circ}{J_0}$ be the disconnected tree-like train track with switches that are located at the intersection of the tree $T$ and the boundary of the junction $\partial J_0$. 
\item Then, we apply the map $F$ to $\tau_0$.

\item Apply $\psi $ to $F(\tau_0)$. 
\item Denote the resulting by $\tau_1$.

\item Now we continue by applying $F$ to $\tau_1$ and repeat the above process.

\item This way, we get;
$$\tau_0 \subset \tau_1 \subset \dots  \subset \cup \tau_i.$$
\end{itemize}

More precisely, let $\Phi :\mathbb{T} \to \mathbb{T} $ be defined as 
$$\Phi := \psi \circ F.$$
Define $\tau_n :=\Phi ^n(\tau_0)$. Since $\tau_0 \subset \tau_1,$ then $ \tau_{n} \subset \tau_{n+1} \ \forall n.$
Let $$\tau_\infty :=\bigcup_{n=0}^\infty \tau_n.$$
Then by construction 
\begin{align*}
	\Phi (\tau_\infty)&=\Phi (\bigcup_{n=0}^\infty \tau_n)
	=\bigcup_{n=0}^\infty \Phi (\tau_n)
	=\bigcup_{n=1}^\infty \tau_n
	=\tau_\infty.
\end{align*}
\begin{figure}[h]
\centering
 \includegraphics[scale=.38]{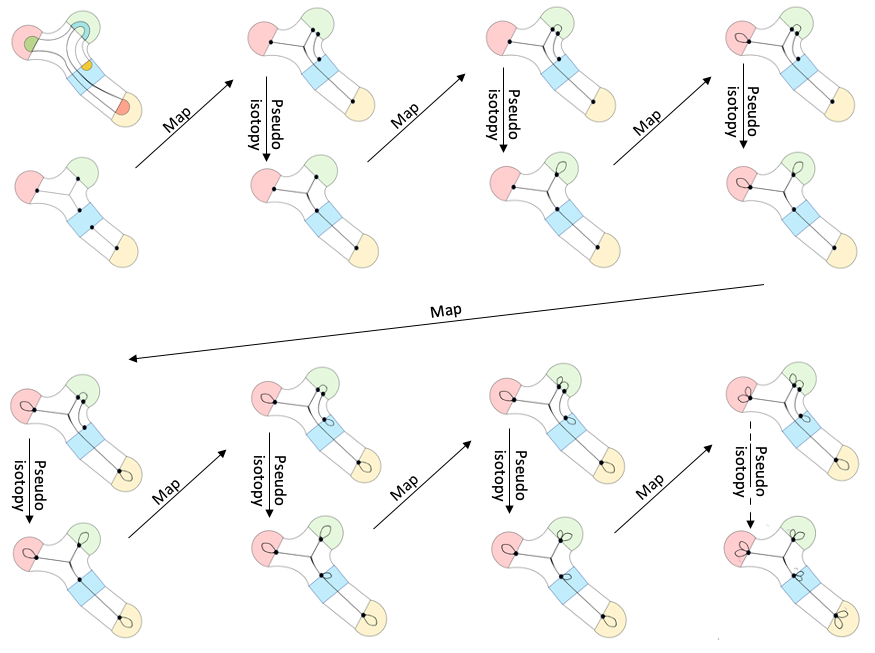}
 \caption{How to produce a tree-like train track using pseudo-isotopies.}
  \label{tree-like}
 \end{figure} 
 
Let $t_i := \pi(S_i), \ 1\leq i\leq s$ be the edges of the tree-like train track $\tau_\infty.$ Define a matrix $M=(m_{ij})$ where $m_{ij}$ equals the number of times $t_i$ crosses $\Phi(t_j)$ for $1\leq i,j \leq s$.

Let $\lambda$ be the leading eigenvalue and $x=(x_1,\dots , x_s)$ and $y=(y_1,\dots , y_s)$ be the right- and left-eigenvalues corresponding to $\lambda$, respectively.
Then we construct rectangles $R_i, 1 \leq i\leq s,$ of dimensions $x_i \times y_i$.
Then we glue these rectangles. To show that this gluing is possible, we will look at the various cases of the branch points. First we will consider the case where the valence is an odd number $n$. Then for the case of an even valence, we will consider the three options of the branch point, the case of a fixed branch point, the case of periodic branch point with period $k>1$, and the case of pre-periodic branch point with pre-period $l>1$.

The next lemma shows that it's possible to glue the rectangles around a branch point with odd valence.
\begin{lemma}
Let $\tau$ be a tree-like train track with a branch point $p$ of valence $n$, where $n$ is odd. Let $R_i, i=1,\dots ,n,$ be $n$ rectangles constructed for the edges around the branch point $p$ in either a clockwise or counter-clockwise direction. Let $x_i$ and $y_i$ be the sides of $R_i$. Then we can glue all of $Y_i, i=1,\dots ,n,$ around the branch point such that each $y_i$ is divided into two subsegments $z_i$ and $w_i$, and $w_i$ is glued to $z_{i+1}$.
\end{lemma}
\begin{proof}
We need to show that the gluing of the subsegments is possible. 
Let the rectangles $R_i, i=1,\dots ,n,$ be organized in a counter-clockwise order such that the sides $y_1, \dots, y_n$ form an $n$-polygon.
So we want to find if the following system has a well-defined solution:
  \begin{equation} \label{sys1}
	\begin{array}{ll}
z_1+z_2 &= y_1\\
z_2+z_3 & = y_2\\
	& \vdots \\
z_n+z_1 & = y_n
\end{array} 	
  \end{equation}
Since $n$ is odd: The above system of equations (\ref{sys1}) has the augmented matrix $$\begin{bmatrix}
	1& 1& 0 &\dots &&0\\
	0& 1& 1 &\dots& &0\\
	\vdots & & \ddots && &\vdots\\
	0& 0& \dots &1&1 &0\\
	0& 0& \dots &&1 &1\\
	1& 0& \dots &&0 &1
\end{bmatrix},$$ such that the following hold

\begin{align*}
	\begin{bmatrix}
	1&-1&\dots &1&-1&1
\end{bmatrix}
&\begin{bmatrix}
	1& 1& 0 &\dots &&0\\
	0& 1& 1 &\dots& &0\\
	\vdots & & \ddots && &\vdots\\
	0& 0& \dots &1&1 &0\\
	0& 0& \dots &&1 &1\\
	1& 0& \dots &&0 &1
\end{bmatrix}\\ &=\begin{bmatrix}
2z_1&0&\dots&0	
\end{bmatrix}.
\end{align*}

Hence we have $$y_1-y_2+\dots +y_{n-1}-y_n=2z_1.$$
 Since $z_1$ must be positive, we need the following condition to hold
 $$y_1+\dots +y_{n-1}> y_2 + \dots+y_n.$$
 But this is true since $y_i>y_{i+1}, i=1,\dots, n-1,$ given that $y_i=\lambda y_{i+1}$. This completes the proof.
\end{proof}

In the case of gluing the rectangles around a branch point with an even valence, the next lemma shows that it is possible for a fixed branch point.
\begin{lemma}
Let $\tau$ be a tree-like train track with a fixed branch point $p$ of valence $n$. Let $R_i, i=1,\dots ,n,$ be $n$ rectangles constructed for the edges around the branch point $p$ in either a clockwise or counter-clockwise direction. Let $x_i$ and $y_i$ be the sides of $R_i$. Then we can glue all of $Y_i, i=1,\dots ,n,$ around the branch point such that each $y_i$ is divided into two subsegments $z_i$ and $w_i$, and $w_i$ is glued to $z_{i+1}$.
\end{lemma}
\begin{proof}
Again here we need to show that the gluing of the subsegments is possible. 
Let the rectangles $R_i, i=1,\dots ,n,$ be organized in a counter-clockwise order such that the sides $y_1, \dots, y_n$ form an $n$-polygon.
So we want to find if the following system has a well-defined solution:
\begin{equation} \label{sys2}
	\begin{array}{ll}
z_1+z_2 &= y_1\\
z_2+z_3 &= y_2\\
	& \vdots \\
z_n+z_1 &= y_n
\end{array}
\end{equation}
Since $n$ is even, then the above system of equations (\ref{sys2}) has the augmented matrix $$\begin{bmatrix}
	1& 1& 0 &\dots &0\\
	0& 1& 1 &\dots &0\\
	\vdots & & \ddots & &\vdots\\
	0& 0& \dots &1 &1\\
	1& 0& \dots &0 &1
\end{bmatrix},$$ with rank $n-1$. Hence, the $\dim(Ker)=1$ with $(1,-1,\dots ,1,-1)$ in the Kernel. That is 

\begin{equation}\label{Yi-Even}
	y_1-y_2+\dots +y_{n-1}-y_n=0.
\end{equation}
	 Also, the Markov matrix of the system associated to the tree-like train track is given by the matrix 
	$$ M=\begin{bmatrix}
	 \begin{matrix}
	 	0& 1& 0 &\dots &0\\
	0& 0& 1 &\dots &0\\
	\vdots & & \ddots & \ddots&\vdots\\
	0& 0& \dots &0 &1\\
	1& 0& \dots &0 &0
	 \end{matrix}
	 & \vline & B\\
    \hline A
    & \vline & 
  \begin{matrix}
	 	& & &\\
		& \ C & &\\
		& & &		
	 \end{matrix}
	\end{bmatrix},$$ 
where the upper left block is $n\times n$ coming from the half-tree that contains the branch point, say, the upper branch $B_u$. Then the block $C$ is coming from the other half-tree, in this case the lower branch $B_l$, say it's $m\times m$.

We claim that
\begin{equation}\label{3}
	Mv=-v
\end{equation}
 where $v=(1,-1,\dots, 1,-1,0,\dots, 0)$ is an eigenvector associated to the eigenvalue $-1$.
 
 The claim is true because block $A$ has all zeros except for the first row which is given exactly by $\begin{bmatrix}
	1&1&0& \dots &0 \end{bmatrix}.$ In fact, the rows of block $A$ represent all the edges that are disjoint from the branch point. 
Block $A$ contains ones only on places representing sides containing a pre-image $q\neq p$ of the branch point. Since the tree-like train track has a fixed branch point then by Lemma \ref{L1}, there is no other branch point that's mapped to the fixed point. Hence the other pre-image $q$ is not a branch point. So $q$ lies on some edge $e_j, j> n$ in the lower branch. The edge $e_j$ maps to the union of two edges containing the critical point, and since the tree is crossing free, the two edges are consecutive. This gives rise to consecutive ones in row $j$ of block $A$. This proves the claim. 
\newline

On the other hand, the transpose is given by the matrix 
$$M^\intercal=\begin{bmatrix}
	 \begin{matrix}
	 	0& 0& 0 &\dots &1\\
	 	1& 0& 0 &\dots &0\\
	\vdots &  \ddots & \ddots& &\vdots\\
	0& 0& \dots &0 &0\\
	0& 0& \dots &1 &0
	 \end{matrix}
	 & \vline & A\\ 
	 \hline B
  & \vline &
  \begin{matrix}
  	& & &\\
  	& \ C& &\\
  	& & &		
	 \end{matrix}
	\end{bmatrix}$$

Let $\lambda$ be the leading eigenvalue for $M^\intercal$ and since $(y_1, \dots, y_{n+m})$ is the associated eigenvector, then we have
\begin{equation} \label{4}
M^\intercal y=\lambda y
\end{equation}

i.e.
$$\begin{bmatrix}
	 \begin{matrix}
	 	0& 0& 0 &\dots &1\\
	1& 0& 0 &\dots &0\\
	\vdots &  \ddots & \ddots& &\vdots\\
	0& 0& \dots &0 &0\\
	0& 0& \dots &1 &0
	 \end{matrix}
	 & \vline & A \\
\hline B
  & \vline &
  \begin{matrix}
	 	& & &\\
		& 
		 C& &\\
		& & &
			
	 \end{matrix}
	\end{bmatrix}
 \begin{bmatrix}
 y_1\\ y_2\\ \vdots \\ y_n\\ \vdots\\ \vdots \\ y_{n+m}
 \end{bmatrix}=\lambda  \begin{bmatrix}
y_1\\ _2\\ \vdots\\ y_n\\ \vdots\\ \vdots \\ y_{n+m}
 \end{bmatrix}$$ 

(\ref{3}) and (\ref{4}) give the following
\begin{align*}
v^\intercal	(M^\intercal y) &= v^\intercal( \lambda y)\\
(Mv)^\intercal y &= \lambda (v^\intercal y)\\
-\langle v,y \rangle &= \lambda \langle v,y \rangle ,
\end{align*} which implies 
\begin{equation*}
(\lambda+1) \langle v,y \rangle =0.
\end{equation*}
Since $\lambda \neq -1,$ we get 
\begin{equation}
	\langle v,y \rangle=0.
\end{equation}

This gives what we wanted to show as in (\ref{Yi-Even}).	 
\end{proof}

The following lemma shows that the gluing is possible in the case of a periodic branch point with period $k>1$.
\begin{lemma}
Let $\tau$ be a tree-like train track with a periodic branch point $p$ of valence $n$ and period $k>1$. Let $R_i, i=1,\dots ,n,$ be $n$ rectangles constructed for the edges around the branch point $p$ in either a clockwise or counter-clockwise direction. Let $x_i$ and $y_i$ be the sides of $R_i$. Then we can glue all of $Y_i, i=1,\dots ,n,$ around the branch point such that each $y_i$ is divided into two subsegments $z_i$ and $w_i$, and $w_i$ is glued to $z_{i+1}$.
\end{lemma}
\begin{proof}
Since $p$ is a periodic branch point of valence $n$ and period $k$, then this part of the Hubbard tree will consist of $k$ branches as shown in Figure \ref{periodic-branch-pt}. The edges around $p$ are denoted by $e_1, \dots, e_n$ in the counter-clockwise direction where $e_1$ is the edge connecting the branch point $p$ to other part of the Hubbard tree. Call the branch point $p$ with the edges $e_1,\dots, e_n$ a $p$-star denoted by $S_p$. 
 \begin{figure}[h]
\centering
\includegraphics[width=0.5\textwidth]{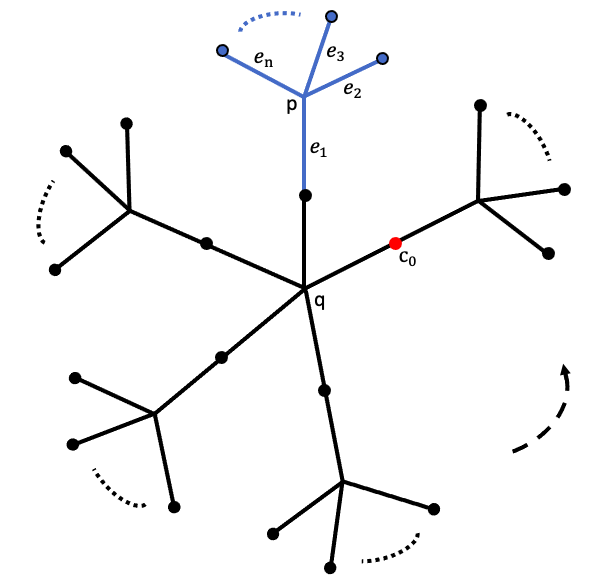}
 \caption{A periodic branch point $p$ of valence $n$ and period $k$.}
 \label{periodic-branch-pt}
 \end{figure}   
We look at the dynamics of the $k$-th iteration of the map $f$ on all edges $e_i$ of the Hubbard tree $T$ and choosing the ones intersecting the $p$-star. This gives the following:
$$\{f^k ( e_1),f^k ( e_2),\dots ,f^k ( e_n)\}=\{e_1,e_2, \dots,e_n\}$$ 
If there is an edge $e_j$ in the Hubbard tree $T$ such that $f^k ( e_j)$ intersects the $p$-star, then the intersection $f^k ( e_j)\cap S_p = e_i\cup e_{i+1}$ for some $i=1,\dots, n-1$. This is because of the condition that the Hubbard tree $T$ is crossing free.
This will show in the Markov matrix for $f^k$ as an upper block which is an $n\times n$ matrix as in the proof of Lemma 4 and all rows in block $B$ are zeroes except for some with consecutive ones. 
$$\begin{bmatrix}
	 \begin{matrix}
	 	0& 0& 0 &\dots &1\\
	1& 0& 0 &\dots &0\\
	\vdots &  \ddots & \ddots& &\vdots\\
	0& 0& \dots &0 &0\\
	0& 0& \dots &1 &0
	 \end{matrix}
	 & \vline & A\ \ \ \\
\hline B
  & \vline &
  \begin{matrix}
	 	& & &\\
		& 
		 C& &\\
		& & &		
	 \end{matrix}
	\end{bmatrix}$$ 
Following the same argument in Lemma 4, we get the result.
This completes the proof.

\end{proof}

\begin{lemma}
Let $\tau$ be a tree-like train track with a pre-periodic branch point $p$ of valence $n$ and period $k>1$. Let $R_i, i=1,\dots ,n,$ be $n$ rectangles constructed for the edges around the branch point $p$ in either a clockwise or counter-clockwise direction. Let $x_i$ and $y_i$ be the sides of $R_i$. Then we can glue all of $Y_i, i=1,\dots ,n,$ around the branch point such that each $y_i$ is divided into two subsegments $z_i$ and $w_i$, and $w_i$ is glued to $z_{i+1}$.
\end{lemma}
\begin{proof}
Since $p$ is pre-periodic, then there is a number $l$ such that $f^l(p)$ is periodic and by the last Lemma, hence the gluing is possible for $f^l(p)$. Then we pull back and that will give the result.
\end{proof}

The last three lemmas showed that the gluing of the $n$-rectangles around the branch point $p$ is possible. 

So now we organize all rectangles following the tree-like train track such that the edges of the tree-like train track represent the $x$-direction of the rectangles.
Then we glue each two adjacent rectangles in the $y$-direction. For those $n$-rectangles around the branch point $p$, the gluing is done in the way described by the previous Lemmas such that a hyperbolic angle is formed. If there is no branch point, then we glue the two rectangles such that they align in the $x$-direction on the side opposite of the loops as in Figure \ref{Aligning-rectangles}.
 \begin{figure}[ht]
\centering
\includegraphics[width=0.3\textwidth]{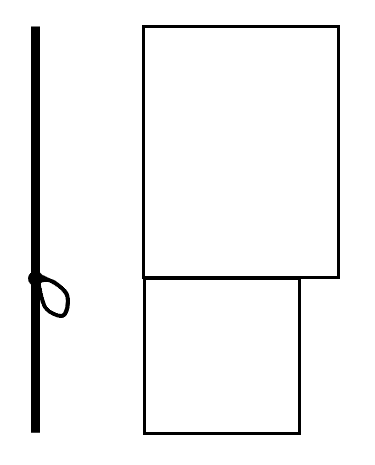}
 \caption{Aligning rectangles according to loops direction.}
 \label{Aligning-rectangles}
 \end{figure} 

Let $\mathcal{R}$ be the union of all the rectangles $R_i$.
Define $H:\mathcal{R}\to \mathcal{R}$ by 
$$H(R_i) :=\tilde{R}_j$$ such that the following hold:
\begin{itemize}
\item 	$\tilde{R}_j$ has dimension $x_i \lambda \times y_i \lambda^{-1}$,
\item $\tilde{R}_j$ is located in the part of $\mathcal{R}$ that corresponds to the image $F(e_i)$ in $F(\tau_\infty )$, say $\tilde{R}_j\subseteq \cup  R_k$ for some $k$, and
\item  $\tilde{R}_j$ align in the $x$-direction with $\cup  R_k$ on the side following the image $F(\tau_\infty )$ of the tree-like train track.  
\end{itemize}

 \begin{figure}[ht]
\centering
\begin{tikzpicture}
 \node(a)
{\includegraphics[width=3cm]{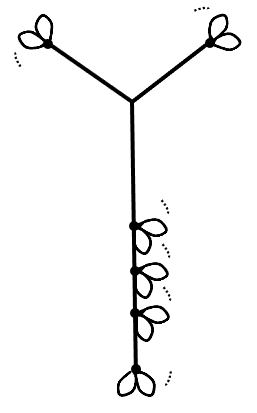}};
 \node[right=of a](b)
{\includegraphics[width=4cm]{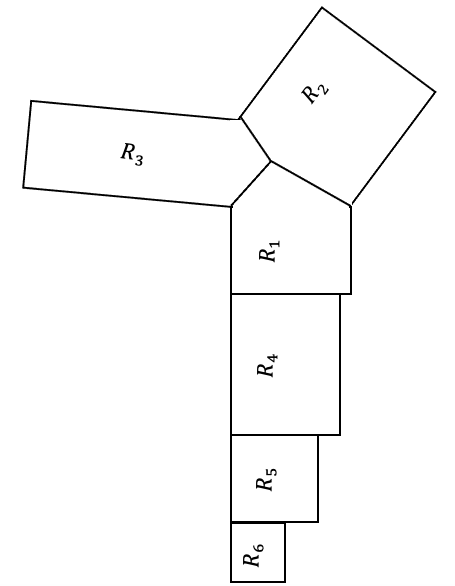}};
\end{tikzpicture}
\caption{Gluing all the rectangles using the tree-like train track to get the surface $\mathcal{R}$.}
\label{Train-Surface}
\end{figure}

\begin{figure}[ht]
\centering
\begin{tikzpicture}[>=latex,node distance=2em]
 \node(a)
{\includegraphics[width=4cm]{Ex1-10.png}};
 \node[right=of a](b)
{\includegraphics[width=4cm]{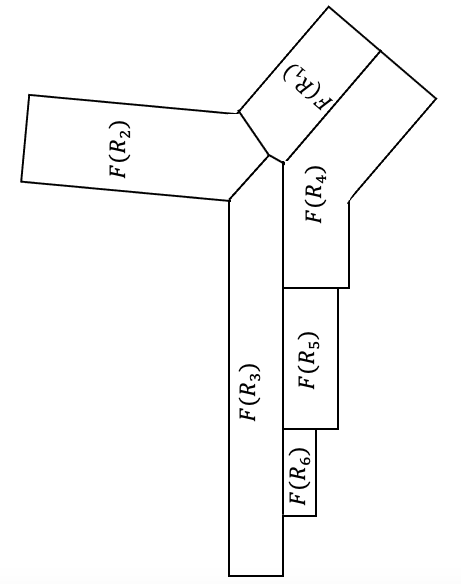}};
\draw[->] (a) -- (b);
\end{tikzpicture}
\caption{Applying the induced map $H$ on $\mathcal{R}$.}
\label{R-H(R)}
\end{figure}

$H$ acts homeomorphically on $\mathcal{R} \backslash (\partial \mathcal{R}\cup L_0)$ where $L_0$ is the side representing the critical point.
 We then glue the $x$-direction of the boundary $\partial \mathcal{R}$ by the following steps:
 \begin{enumerate}
 \item 	first identify the pre-images of each segment in the $x$-direction of the image of the boundary $H(\partial \mathcal{R})$,
 \item  then glue the remaining parts of the $x$-direction of the boundary $\partial \mathcal{R}$ by following the identifications in step 1 and pull-back to the pre-images. 
 \item repeat step 2 until all the $x$-direction of the boundary $\partial\mathcal{R}$ are identified.   
 \end{enumerate}
Last we identify the rest of the $y$-direction of the boundary $\partial\mathcal{R}$, which represent the loops in the tree-like train track, as follows:

\begin{itemize}
\item 	starting from the side of the first formed loop, we pinch the side and identify the two sides of the pinch;
\item then we continue with a set of identifications by pinching the remaining part of the side where each pinch represents a loop;
\item since there are infinite loops, then there will be a set of infinite pinching or identifications;
\item to decide the exact points of the side where the identifications take place we use the following equation $$\sum_{n=0}^{\infty}{\lambda ^{-np}y_0}=y$$ where $y$ is the length of the side, $y_0$ is the length of the first identification, $\lambda$ is the leading eigenvalue, and $p$ is the period of the critical point.
\end{itemize}
We now need to show that the infinite singularities, occurring from the boundary identifications, accumulate at finitely many points. First, let us take care of the singularities formed from the loops in the tree-like train track. Since $f$ is a post-critically finite map, then there are finite number of connected segments $\tilde{S}_i$ in the boundary $\partial \mathcal{R}$ with loops. Each segment $\tilde{S}_i$ has infinite singularities. WLOG, let $\tilde{S}_i$ be a side of length $y$, and let's parametrize it as an interval $\tilde{S}_i=[0,y]$ such that $0$ corresponds to the side of $\tilde{S}_i$ that the first singularity is located. Thus the singularities will be located in the interval $[0,y_i]$ exactly at the points $$\frac{y_i}{2},\ y_i +\frac{y_i}{2\lambda^p},\ y_i +\frac{y_i}{2}+\frac{y_i}{2\lambda^{2p}},\ \dots $$
Since $$\lim_{n\to \infty} \frac{y_i}{2\lambda^{np}}=0,$$ then the singularities on $\tilde{S}_i$ accumulate at the point $$y_i +\frac{y_i}{\lambda^p}+\frac{y_i}{\lambda^{2p}}+\ \dots =y.$$
Similarly, for the singularities in the $x$-direction, there is at most one limit point on each side of the thickened tree. This shows that the resulting map $\varphi$ in the quotient is a generalized pseudo-Anosov. 
 \end{proof}

Now we will prove the converse of Theorem \ref{main-theorem}, that is the second part of the main theorem:

\begin{theorem} \label{converse}
If $\varphi$ is a generalized pseudo-Anosov map that is an extension of a non-degenerate Hubbard tree $T$ then $T$ is crossing free. 
\end{theorem}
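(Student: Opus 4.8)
The plan is to reverse-engineer the construction: since $\varphi : S \to S$ is a generalized pseudo-Anosov extending $f : T \to T$, with projection $\pi : S_0 \to T_0$ obtained by collapsing the leaves of one invariant foliation $\mathcal F$ (say the stable one), I will use the transverse foliation $\mathcal G$ to produce the planar embedding of $T$ in which $f(B_l)$ can be isotoped off the interior of $f(B_u)$. First I would fix a small $\varphi$-invariant neighbourhood structure: for each point of the critical orbit $c_i \in T$, the fibre $\pi^{-1}(c_i)$ is a leaf of $\mathcal F$, and a neighbourhood of it in $S$ is a ``band'' foliated by $\mathcal F$-leaves which $\pi$ sends onto a neighbourhood of $c_i$ in $T$; at the branch points these bands meet in the prong pattern of a pronged singularity. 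The transverse measure $\mu^{\mathcal G}$ gives each such band a well-defined width, and the leaves of $\mathcal G$ inside it give a product structure. The key point is that $S$ is an honest surface embedded (locally) in the plane — or at least carries a well-defined cyclic order of prongs at each point — so pushing $T = \pi(\text{core})$ down through $\pi$ inherits a planar embedding, and this is the embedding I will work with.

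Next I would translate the commuting diagram into a statement about $B_u$ and $B_l$. Lift $B_u$ and $B_l$ to their preimages $\widetilde{B_u} = \pi^{-1}(B_u \setminus \{\text{endpoints}\})$ and $\widetilde{B_l}$, which are disjoint sub-surfaces of $S$ meeting along the fibre over $c_0$. Because $\varphi$ is a homeomorphism and $f|_{B_u}$, $f|_{B_l}$ are each injective, $\varphi$ maps $\widetilde{B_u}$ homeomorphically onto $\pi^{-1}(f(B_u))$ and $\widetilde{B_l}$ onto $\pi^{-1}(f(B_l))$, and these two images are subsurfaces of $S$ with disjoint interiors (their interiors are disjoint upstairs because $\varphi$ is injective and $\widetilde{B_u}, \widetilde{B_l}$ have disjoint interiors). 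Now project down: $\pi(\varphi(\widetilde{B_u})) = f(B_u)$ and $\pi(\varphi(\widetilde{B_l})) = f(B_l)$, and I claim the disjointness of interiors upstairs, together with the fact that $\pi$ collapses $\mathcal F$-leaves (which are transverse to the ``horizontal'' direction recording the tree coordinate), forces $f(B_l)$ to be isotopable — within an $\varepsilon$-neighbourhood of $T$, fixing endpoints of $T$ — off the interior of $f(B_u)$. Concretely, the subsurface $\varphi(\widetilde{B_u})$ is a planar neighbourhood of $f(B_u)$ and $\varphi(\widetilde{B_l})$ a planar neighbourhood of $f(B_l)$ with disjoint interiors, so I can take the embedded copy of $f(B_l)$ sitting inside $\varphi(\widetilde{B_l})$ as the required isotoped tree; it avoids the interior of $f(B_u)$ because it avoids the interior of $\varphi(\widetilde{B_u}) \supseteq \overset{\circ}{f(B_u)}$.

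I would then close the argument by checking the isotopy stays valid: it must fix the endpoints of $T$ and remain in a neighbourhood of $T$. Fixing endpoints is automatic since $\pi$ identifies the fibres over the endpoints $c_i$ canonically and $\varphi$ respects the critical orbit. Staying near $T$ follows because $\varphi(\widetilde{B_u})$ and $\varphi(\widetilde{B_l})$ collapse under $\pi$ into a neighbourhood of $f(B_u) \cup f(B_l) \subseteq T$, and the isotopy can be taken inside $\pi$-saturated sets. The main obstacle I anticipate is making precise the ``planar embedding induced by $\pi$'' — i.e.\ verifying that collapsing a single invariant foliation of a generalized pseudo-Anosov surface really does yield a planar tree with a canonical cyclic order at branch points, and that the pronged-singularity structure of $\mathcal F$ at the branch points is exactly what encodes the embedding; this is where the hypothesis that the singularities accumulate on finitely many points (and hence $T$ has finitely many branch points, matching Lemma \ref{L2}) must be invoked. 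A secondary subtlety is the behaviour at $c_0$: one must check that $\varphi(\widetilde{B_u})$ and $\varphi(\widetilde{B_l})$ genuinely meet only along the fibre over $f(c_0)$ and that this does not create a forced crossing — this is exactly dual to the ``$J_{Red} \cup J_{Blue}$ is half a circle'' normalization in the proof of Theorem \ref{main-theorem}.
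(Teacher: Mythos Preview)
Your approach is essentially the same as the paper's: lift $B_u$ and $B_l$ to $S_0$, use that $\varphi$ is a homeomorphism to separate their images upstairs, then use a planar embedding of $S_0$ to push this separation down to the required isotopy in $\mathbb{C}$. The only real difference is packaging: the paper lifts via a one-dimensional section $i:T_0\to S_0$ with $\pi\circ i = \mathrm{id}$ and writes down an explicit straight-line homotopy in planar coordinates between $i\circ f$ and (a leafwise-rescaled) $\varphi\circ i$, whereas you lift via the full two-dimensional preimages $\pi^{-1}(B_u),\pi^{-1}(B_l)$ and argue topologically. One step in your version is not quite right as written: the containment $\varphi(\widetilde{B_u})\supseteq \overset{\circ}{f(B_u)}$ does not hold in general, because $\varphi$ contracts $\mathcal F$-leaves and so $\varphi(\pi^{-1}(B_u))$ is a proper sub-band of $\pi^{-1}(f(B_u))$, which need not contain your chosen section of $f(B_u)$. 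The paper sidesteps this by working with the one-dimensional lifts $i(B_u),i(B_l)$ directly and sliding along leaves; in your language the fix is to note that any two sections over $f(B_u)$ (respectively $f(B_l)$) are leafwise-isotopic, so you may isotope both branches, not just $f(B_l)$, and then undo the isotopy of $f(B_u)$ by an ambient isotopy.
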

\begin{proof}

	Since $\varphi$ is a generalized pseudo-Anosov map that is an extension of a non-degenerate Hubbard tree $T$, then the following  diagram commutes:
	\[
 \begin{tikzcd} S_1 \arrow{r}{\varphi} \arrow[swap]{d}{\pi} & S_0 \arrow{d}{\pi} \\  T_1 \arrow{r}{f} & T_0  
 \end{tikzcd} 
 \]
where $\pi:S_0\to T_0$ is the quotient map given by collapsing each leaf of one of the invariant foliations $\mathcal{F}$ of $\varphi$ to a point, and $f:T\to T$ is a continuous map.
In order to show that $T$ is crossing-free, we need to show that the embedding of $T_0$ in $S_0$ has the following property: 
the images $f(B_u)$ and $f(B_l)$ of the upper and lower branches can be isotoped, while fixing the endpoints of $T$, into two trees with disjoint interiors. 

To show this, first we define $i:T_0\to S_0$ such that $\pi \circ i := id_{T_0}$. Consider $T :=B_u \cup B_l$. Let $\tilde{B}_u :=i(B_u\cap T_0), \tilde{B}_l :=i(B_l\cap T_0)$. Since $\varphi$ is a homeomorphism on $S$, then $\varphi (\tilde{B}_u\backslash\{c_0\})\cap \varphi (\tilde{B}_l\backslash\{c_0\})=\phi.$
Let $\tilde{T} :=i(T_0)$. Let $\tilde{c_0} :=i(c_0)$ and $\tilde{c_1} :=\varphi (\tilde{c_0})$. Let $d_{\mathcal{F}}(x,y)$ be the distance between the leaves of $\mathcal{F}$ containing $x$ and $y$.

Since $S_0$ can be embedded in the plane, we can use coordinates of the plane to define a homotopy as follows:
$$H:\tilde{T}\times [0,1] \to S_0\ \text{ given by}$$
$$H(x,0) :=\tilde{f}(x)=i(f(\pi(x)))\ \text{ and }\ H(x,1) := d_{\mathcal{F}}(x,\tilde{c_1})\varphi (x)\ \text{ for all }\ x\in \tilde{T},$$ and $$H(x,t)=(1-t)H(x,0)+t \ H(x,1), \ t\in [0,1].$$
Since $S_0$ can be embedded in the plane $\mathbb{C}$, then $H$ is  a homotopy in $\mathbb{C}$.
We can extend the homotopy to the endpoints of $T$ and note that it induces an isotopy on the upper and lower branches because by definition, at each time $t$ every leaf will contain exactly one point of $H(\tilde{B}_u,t)$ and the same hold for the $H(\tilde{B}_l,t)$.
Now we obtained the isotopy in the definition of the crossing-free.

That completes the proof.
	
\end{proof}
\section{An Example}
In this section, we will show with an example how to construct a pseudo-Anosov map from a crossing-free Hubbard tree.

Recall Example \ref{Example} in Section \ref{Backgraound}, where $f$ is a post-critically finite quadratic polynomial with the characteristic angle $\theta =\frac{1}{5}$.  
Since the Hubbard tree $T$ is crossing-free, then by Theorem \ref{main-theorem} it is extendable to a generalized pseudo-Anosov homeomorphism.

Now using the Markov matrix, we will construct the strips as follows:
 The leading eigenvalue of $$M= \begin{bmatrix}
 0 & 1 & 0 & 0 \\
 0 & 0 & 1 & 0 \\
 1 & 0 & 0 & 1 \\
 1 & 1 & 0 & 0 	
 \end{bmatrix}$$ is $\lambda = 1.39534$ with eigenvector $(0.582522,0.812815,1.13415,1).$ We also have the leading eigenvalue of the transpose $$M^\intercal= \begin{bmatrix}
 0 & 0 & 1 & 1 \\
 1 & 0 & 0 & 1 \\
 0 & 1 & 0 & 0 \\
 0 & 0 & 1 & 0 	
 \end{bmatrix}$$ is $\lambda = 1.39534$ with eigenvector $(1.71667,1.94697,1.39534,1).$
 Normalizing these right- and left-eigenvectors we get:
 $$x=(0.16504438,0.23029267,0.32133565,0.28332729)$$ and $$y=(0.28332657,0.32133626,0.23029289,0.16504428).$$
Now we construct the rectangular strips of dimensions $x_i \times y_i$ as in the following Figure \ref{Const-Rectangles}:
 \begin{figure}[h]
\centering
\begin{minipage}[b]{0.4\linewidth}
\includegraphics[width=1\textwidth]{Thick-Tree18}
\label{fig:minipage2}
\end{minipage}
\quad
\begin{minipage}[b]{0.4\linewidth}
\includegraphics[width=1\textwidth]{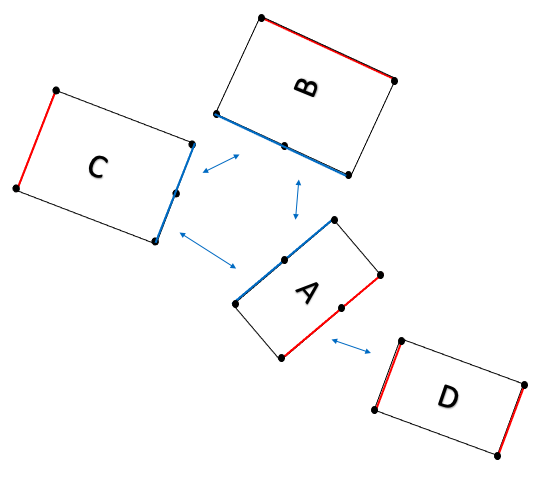}
\label{fig:minipage1}
\end{minipage}
\caption{Constructing Rectangular Strips.}
\label{Const-Rectangles}
\end{figure}  
 
\subsection{Rectangle Decomposition of a Surface}
 After identifying the sides and applying the map we get the following:
 \begin{figure}[h]
     \centering
     \begin{subfigure}[b]{0.45\textwidth}
         \centering
         \includegraphics[width=\textwidth]{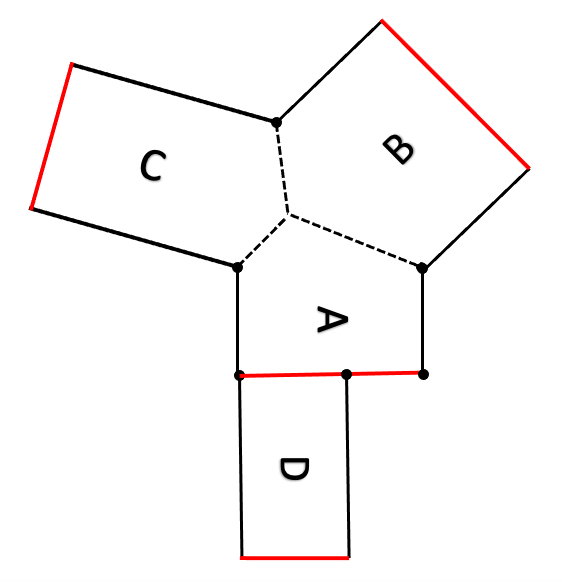}
         \caption*{The rectangle decomposition of a surface from the train track.}
         \label{rectangle-decomp}
     \end{subfigure}
     \hfill
     \begin{subfigure}[b]{0.45\textwidth}
         \centering
         \includegraphics[width=\textwidth]{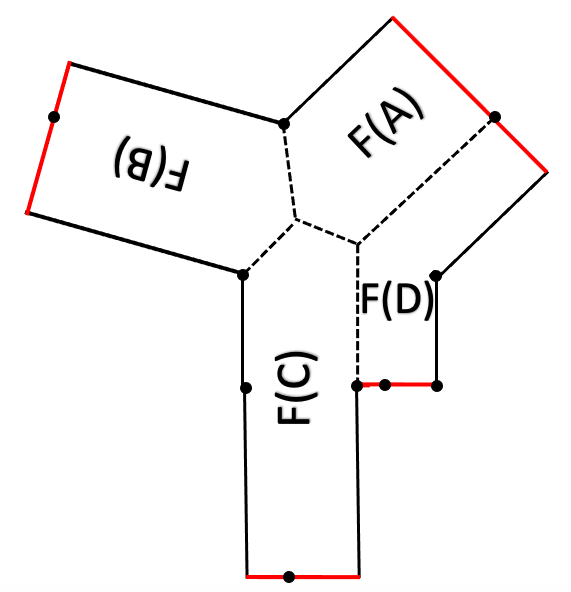}
         \caption*{The rectangle decomposition of the surface after applying the map.}
         \label{Decomp-aftermap}
     \end{subfigure}
     \caption{}
        \label{Decomposition-Surface}
    \end{figure}

 Using the $x$-dimensions of $A,B,C,$ and $D$ from the picture in Figure \ref{Decomposition-Surface}, we will show that the identifications presented in Figure \ref{Strip-final} is correct.
 Since $C$ is mapped to $A$ and $D$, and from Figure 9 we find that there will be an identification between $A$ and an equal part of $D$. We represented this as red arrows left of $C$ in Figure \ref{Strip-final}.
 The remainder of $D$ that is $D-A$ from Figure \ref{Decomposition-Surface}, will be identified with an equal part of $C$. These are shown as yellow arrows on top of $C$ and $B$ in Figure \ref{Strip-final}.
 In the following calculations we refer to the parts from Figure \ref{Decomposition-Surface} and the identifications in Figure \ref{Strip-final}:
   \begin{figure}[h]
\centering
 \includegraphics[scale=.3]{Rec-Strips-map-final-new}
 \caption{Constructing Rectangular Strips.}
 \label{Strip-final}
 \end{figure} 
 \begin{align*}
 C = & (D-A) + F^{-4}(A+D+(D-A)) +	F^{-4}(F^{-4}(A+D+(D-A)))+ \dots \\
 	 	= & D-A +  F^{-4}(2D)+ F^{-4}( F^{-4}(2D))+ \dots \\
 	 	= & D-A + 2D \sum_{n=1}^{\infty}{(\frac{1}{\lambda ^4})^n}\\
 	= & D-A + 2D (\frac{1}{1-\frac{1}{\lambda ^4}}-1 )\\
 	= & D-(\lambda C-D) + 2D (\frac{ 1} {\lambda ^4 -1}) \\
 	= & -\lambda C + 2D (\frac{ 1} {\lambda ^4 -1}+1) \\
 	= & -\lambda C + 2D (\frac{ \lambda ^4} {\lambda ^4 -1}) \\
 	= & \frac{-\lambda C (\lambda ^4 -1) + 2D \lambda ^4} {\lambda ^4 -1} \\
 	= & \frac{-\lambda C (\lambda ^4 -1) + 2(\lambda C-A) \lambda ^4} {\lambda ^4 -1} \\
 	= & \frac{-\lambda C (\lambda ^4 -2 \lambda +1)}{\lambda ^4 -1} \\
 \end{align*}
 Notice that with $\lambda= 1.39534,$ we have $$\frac{-\lambda C (\lambda ^4 -2 \lambda +1)}{\lambda ^4 -1} \approx C$$

 Similarly, we find that all the identifications in Figure \ref{Strip-final} match the lengths of the $x$-coordinates of the strips.
 
After all the identifications, the result is a surface where all the vertices of the thick tree are identified to one point. Hence the surface is a sphere with countably many singularities accumulating at this one point.

\end{document}